\numberwithin{equation}{section}
\newtheorem{lemma}{Lemma}
\newtheorem{proposition}{Proposition}
\newtheorem{remark}{Remark}
\def\div{\nabla\cdot}
\def\sign{{\rm sign}}
\def\d{\,\mathrm{d}}
\def\eps{\varepsilon}
\def\N{\mathbb{N}}
\def\R{\mathbb{R}}
\def\G{\mathbb{G}}
\def\P{\hbox{\rlap{I}\kern.16em P}}
\def\Q{\hbox{\rlap{\kern.24em\raise.1ex\hbox
      {\vrule height1.3ex width.9pt}}Q}}
\def\M{\hbox{\rlap{I}\kern.16em\rlap{I}M}}
\def\Z{\hbox{\rlap{Z}\kern.20em Z}}
\def\({\begin{eqnarray}}
\def\){\end{eqnarray}}
\def\[{\begin{eqnarray*}}
\def\]{\end{eqnarray*}}
\def\part#1#2{\frac{\partial #1}{\partial #2}}
\def\grad{\nabla}
\def\Norm#1{\left\| #1 \right\|}
\def\pmb#1{\setbox0=\hbox{$#1$}
  \kern-.025em\copy0\kern-\wd0
  \kern-.05em\copy0\kern-\wd0
  \kern-.025em\raise.0433em\box0 }
\def\bar{\overline}
\def\tot#1#2{\frac{\d #1}{\d #2}} 
\def\laplace{\Delta}
\def\d{\,\mathrm{d}}
\def\N{\mathbb{N}}
\def\R{\mathbb{R}}
\def\epsilon{\varepsilon}
\def\I{\mathbb{I}}
\def\A{\mathbb{C}}
\def\P{\mathbb{P}}
\def\En{{E}}
\def \pspace {\bar H^1(\Omega)}
\def\blueJH#1{{\textcolor{blue}{[#1]}}}
\begin{document}


\centerline{{\Large\textbf{Tensor PDE model of biological network formation}}}
\vskip 7mm

\centerline{
	{\large Jan Haskovec}\footnote{Mathematical and Computer Sciences and Engineering Division,
		King Abdullah University of Science and Technology,
		Thuwal 23955-6900, Kingdom of Saudi Arabia; 
		{\it jan.haskovec@kaust.edu.sa}}\qquad
	{\large Peter Markowich}\footnote{Mathematical and Computer Sciences and Engineering Division,
		King Abdullah University of Science and Technology,
		Thuwal 23955-6900, Kingdom of Saudi Arabia;
		{\it peter.markowich@kaust.edu.sa}, and
		Faculty of Mathematics, University of Vienna, Oskar-Morgenstern-Platz 1, 1090 Vienna;
		{\it peter.markowich@univie.ac.at}}\qquad
	{\large Giulia Pilli}\footnote{Faculty of Mathematics, University of Vienna, Oskar-Morgenstern-Platz 1, 1090 Vienna;
		{\it giulia.pilli@univie.ac.at}}
	}
\vskip 10mm

\noindent{\bf Abstract.}
We study an elliptic-parabolic system of partial differential equations describing formation of biological network structures.
The model takes into consideration the evolution of the permeability tensor under the influence of a diffusion term,
representing randomness in the material structure, a decay term describing metabolic cost and a pressure force.
A Darcy's law type equation describes the pressure field.
In the spatially two-dimensional setting,
we present a constructive, formal derivation of the PDE system from the discrete network formation model
in the refinement limit of a sequence of unstructured triangulations.
Moreover, we show that the PDE system is a formal $L^2$-gradient flow
of an energy functional with biological interpretation,
and study its convexity properties.
For the case when the energy functional is convex, we construct
unique global weak solutions of the PDE system.
Finally, we construct steady state solutions in one- and multi-dimensional settings
and discuss their stability properties.
\vskip 7mm


\noindent{\bf AMSC:} 35G61, 35K57, 35D30, 92C42, 92C42

\noindent{\bf Keywords:} Biological transportation networks, tensor, weak solution, gradient flow.




\section{Introduction}
In this paper we study the tensor PDE system modeling formation of biological transportation networks,
\(
   - \div [ (r \I + \A) \grad p ] &=& S, \label{eq:Poisson} \\
   \part{ \A}{t}- D^2 \Delta \A - c^2 \grad p \otimes \grad p + \frac{M'(|\A|)}{|\A|} \A &=& 0,  \label{eq:A}
\)
for the scalar pressure field $p=p(t,x)\in\R$ of the fluid transported within the network and the conductance (permeability) tensor $\A=\A(t,x)\in\R^{d\times d}$
with $d\leq 3$ the space dimension.
 We assume the validity of Darcy's law for slow flow in porous media, connecting the flux $q$ of the fluid with the gradient of its pressure via the action of the
 permeability tensor $\P[\A]$,
\[
   q = - \P[\A] \grad p.
\]
The total permeability tensor is assumed to be of the form $\P[\A] := r\I + \A$, where
the scalar function $r=r(x) \geq r_0 > 0$ describes the isotropic background permeability of the medium, 
while the conductance tensor $\A$ tends to align the flux along its eigenvectors.
The source/sink distribution $S=S(x)$ in the mass conservation equation \eqref{eq:Poisson}
is to be supplemented as an input datum and is assumed to be independent of time.
The diffusivity parameter $D\geq 0$ in \eqref{eq:A} measures the effect of random fluctuations in the network,
and the activation parameter $c^2>0$ controls the strength of the network formation feedback loop.
This corresponds to a kinetic term in the respective energy functional
and represents the influence that the fluid movement has on the formation of the network structure.
The last term in \eqref{eq:A} is the functional derivative of the metabolic function $M:\R^+ \to \R^+$, which describes the dependence of the metabolic cost
of maintaining the network on its transportation capacity. 
Here and in the sequel we shall denote
\[
   |\A| := \sqrt{\sum_{i=1}^d \sum_{j=1}^d \A_{ij}^2} = \sqrt{\A:\A}
\]
the Frobenius norm of the matrix $\A$.

We pose \eqref{eq:Poisson}, \eqref{eq:A} on a bounded domain $\Omega\subset\R^d$ with smooth boundary $\partial\Omega$,
subject to homogeneous Dirichlet boundary conditions on $\partial\Omega$ for $\A$
and homogeneous Neumann boundary condition for $p$,
\(   \label{BC_0}
   \A(t,x) = 0,\qquad \nu(x)\cdot\P[\A]\grad p(t,x)=0 \qquad \mbox{for } x\in\partial\Omega,\; t\geq 0,
\)
with $\nu=\nu(x)$ the outer normal vector to $\partial\Omega$,
and subject to the initial condition for $\A$,
\( \label{IC_0}
   \A(t=0,x)=\A^0(x)\qquad\mbox{for } x\in\Omega,
\)
with $\A^0=\A^0(x)$ symmetric and positive semidefinite almost everywhere in $\Omega$.
In order for \eqref{eq:Poisson} to be solvable subject to \eqref{BC_0} we assume $S=S(x)$
to have vanishing mean, i.e.,
\(  \label{ass:S}
   \int_\Omega S(x) \d x = 0.
\)

The system \eqref{eq:Poisson}--\eqref{eq:A} was introduced in \cite{Hu-Cai-19}.
Its fundamental structural property is that it represents the constrained $L^2$-gradient flow associated with the energy functional
\begin{equation} \label{energy}
   \En_D[\A] := \int_{\Omega } \frac{D^2}{2} |\grad \A|^2 + c^2 \grad p[\A] \cdot \P[\A] \grad p[\A]  + M(|\A| ) \d x,
\end{equation}
where $p[\A]$ is the unique solution (up to an additive constant) of the Poisson equation
\eqref{eq:Poisson} subject to the homogeneous Neumann boundary condition \eqref{BC_0}.
In Section \ref{sec:energy} we show that for convex metabolic cost functions $M$
the energy functional is convex.
Based on this fact, we construct unique solutions of  \eqref{eq:Poisson}--\eqref{eq:A},
both with $D>0$ and $D=0$, as the corresponding gradient flows.
Finally, in Section \ref{sec:steady} we study steady states of the system \eqref{eq:Poisson}--\eqref{eq:A}.
For a strictly convex metabolic function $M$ and $D>0$, existence of unique steady states
follows from standard arguments of the calculus of variations.
For $D=0$ and $M$ being a power-law with exponent $\gamma>1$,
the steady state problem reduces to a $p$-Laplacian equation
which, again by standard arguments, has unique solutions.
The situation is significantly different for the borderline case $\gamma=1$,
i.e., for linear metabolic cost. Here one arrives at a highly nonlinear free-boundary problem
separating the region where $\A=0$ from the region with nonvanishing $\A$.
Existence and uniqueness of solutions of this problem was shown
in \cite{HMPS16}.

A variant of the system \eqref{eq:Poisson}--\eqref{eq:A} was proposed in \cite{Hu-Cai},
where the local permeability (conductance) of the porous medium is described in terms
of a vector field $m=m(t,x)\in\R^d$.
The model is obtained by considering \eqref{energy} with $D=0$ and imposing $\A$
to be the rank-one tensor $\A:=m\otimes m$. This leads to the energy functional
\begin{equation} \label{m-energy}
   \widetilde \En_0[m] := \int_{\Omega } c^2 \grad p[m] \cdot (r\I + m\otimes m) \grad p[m]  + M(|m|^2) \d x,
\end{equation}
where $p=p[m]$ is the unique solution of the Poisson equation
\(
   - \div [ (r \I + m\otimes m) \grad p ] = S \label{eq:m-Poisson}
\)
subject to no-flux boundary condition on $\partial\Omega$.
Note that the total permeability tensor $\widetilde\P[m] := r\I + m\otimes m$
has the principal directions (eigenvectors) $m/|m|$ and, resp., $m^\perp$ with eigenvalues
$r(x) + |m|^2$ and, resp., $r(x)$. This means that the flow "feels" only the background permeability $r(x)>0$
in the directions orthogonal to $m$, while the principal permeability is increased by $|m|^2$
in the direction of the conductance vector $m$.
Re-introducing the Dirichlet integral $\frac{D^2}{2} \int_\Omega |\grad m|^2 \d x$ into \eqref{m-energy}
and taking the formal gradient flow with respect to the $L^2$-topology leads to the system
\(
    -\div [(r\I + m\otimes m)\grad p] &=& S,   \label{eq01}\\
    \part{m}{t} - D^2\laplace m - c^2(m\cdot\grad p)\grad p + 2 M'(|m|^2) m &=& 0. \label{eq02}
\)
Detailed mathematical analysis of the system \eqref{eq01}--\eqref{eq02}
was carried out in the series of papers \cite{HMP15, HMPS16, bookchapter, AAFM}
and in \cite{Li, Xu2, Xu}, while its various other aspects were studied in  \cite{BHMR, HKM, HKM2, HMP19, Hu}.

The system \eqref{eq:Poisson}--\eqref{eq:A} can be seen as a more general model than \eqref{eq01}--\eqref{eq02},
not restricting the network permeability tensor to be a rank-one tensor product.
Moreover, under appropriate assumptions on the parameters,
\eqref{eq:Poisson}--\eqref{eq:A} is obtained as the formal continuum limit of a sequence of discrete graph problems,
as we shall demonstrate in Section \ref{sec:derivation} below.
On the other hand, the system \eqref{eq01}--\eqref{eq02} can be linked to the discrete problem
by considering macroscopic physical laws of fluid flow in porous media,
in particular, Darcy's law and the Carmen-Kozeny equation; see \cite[Section 2.3]{bookchapter}
for details.

The paper is structured as follows.
In Section \ref{sec:derivation} we provide a formal derivation of the tensor PDE model
\eqref{eq:Poisson}--\eqref{eq:A} as a continuum limit of the discrete model on a sequence of unstructured regular graphs.
In Section \ref{sec:energy} we prove the convexity of the energy \eqref{energy}
for convex metabolic cost functions $M$, and construct solutions of the corresponding
gradient flow, both with $D>0$ and $D=0$.
In Section \ref{sec:steady} we study steady states of the system \eqref{eq:Poisson}--\eqref{eq:A},
both with $D>0$ and $D=0$.

\newcommand\Vset{\mathbb{V}}
\newcommand\Eset{\mathbb{E}}
\newcommand\E{\mathcal{E}}
\newcommand\CC{\mathcal{C}}
\newcommand\NN{\mathcal{N}}
\newcommand\Qh{\mathbb{Q}^h}
\newcommand\Tup{T^\vartriangle}
\newcommand\Tdown{T^\triangledown}
\newcommand\Th{\mathcal{T}^h}

\section{Formal derivation from the discrete model in 2D}\label{sec:derivation}
A particular instance of the system \eqref{eq:Poisson}--\eqref{eq:A} 
with diagonal permeability tensor $\A$ was derived formally in \cite{HKM}
from the discrete network formation model \cite{Hu}.
The system was derived as a formal $L^2$-gradient flow of the continuum
energy functional \eqref{energy}, obtained as a limit
of a sequence of discrete problems under refinement of the network,
with geometry restricted to rectangular networks in 2D.
In \cite{HKM2} the process was carried out rigorously,
in terms of the $\Gamma$-limit of a sequence of discrete energy functionals
with respect to the strong $L^2$-topology.

The goal of this Section is to provide a formal derivation of the continuum energy functional \eqref{energy}
as a limit of discrete energy functionals posed on a sequence of
\emph{unstructured} triangulations of a bounded domain $\Omega\subset \R^2$, 
i.e., without the restriction on rectangular geometries that was imposed in \cite{HKM, HKM2}.
The corresponding rigorous limit procedure will be studied in a future work.

The discrete model is posed on a sequence of undirected graphs $\G^h=(\Vset^h,\Eset^h)$, $h>0$,
consisting of finite sets of vertices $\Vset^h$ and edges $\Eset^h$.
Each edge $(i,j)\in\Eset^h$, connecting the vertices $i$ and $j\in\Vset^h$, has a fixed, prescribed length $L_{ij}=L_{ji}>0$ with $L_{ij} = O(h)$,
where the limit $h\to 0$ is under investigation.
Its conductivity is denoted by $\CC_{ij}=\CC_{ji}\geq 0$.
Each vertex $i\in\Vset^h$ has associated the pressure $P_i\in\R$ of the material flowing through it.
The local mass conservation in each vertex is expressed in terms of the Kirchhoff law
\begin{align}\label{eq:kirchhoff}
   -\sum_{j\in \NN(i)} \CC_{ij}\frac{ P_j-P_i}{L_{ij}}=S_i\qquad \text{for all~}i\in \Vset^h,
\end{align}
where $\NN(i)$ denotes the set of vertices connected to $i\in\Vset^h$ through an edge.
Moreover, $S=(S_i)_{i\in\Vset^h}$ is the prescribed vector of strengths of flow sources ($S_i>0$) or sinks ($S_i<0$).
Given the vector of conductivities $\CC=(\CC_{ij})_{(i,j)\in\Eset^h}$,  the Kirchhoff law \eqref{eq:kirchhoff}
is a linear system of equations for the vector of pressures $P=(P_i)_{i\in\Vset^h}$.
A necessary condition for its solvability is the global mass balance
\(   \label{mass_balance}
   \sum_{i\in\Vset^h} S_i=0,
\)
whose validity we assume in the sequel.
Then, the linear system \eqref{eq:kirchhoff} is solvable if and only
if the graph with edge weights $\CC=(\CC_{ij})_{(i,j)\in\Eset^h}$ is connected \cite{bookchapter},
where only edges with positive conductivities $\CC_{ij} >0$ are taken into account (i.e., edges with zero conductivities
are discarded). Note that the solution is unique up to an additive constant.

The conductivities $\CC_{ij}$ are subject to a minimization process with the energy functional
\begin{align}\label{eq:energydisc}
   {\En}^h[\CC] := \sum_{(i,j)\in\Eset^h} \left( \CC_{ij} \frac{(P_j-P_i)^2}{L_{ij}^2} + M(\CC_{ij}) \right) L_{ij},
\end{align}
where $\CC = (\CC_{ij})_{(i,j)\in\Eset^h}$ denotes the vector of edge conductivities,
and the sum is taken over unoriented edges, i.e., the edge $(i,j)$ is considered identical with $(j,i)$
and is counted only once.
The first term in the summation describes the (physical) pumping power necessary for transporting the material through the network,
while the second term describes the metabolic cost of maintaining the network structure,
with the metabolic function $M:\R^+\to\R^+$.
It is remarkable that, despite the nonlocal character of the Kirchhoff law \eqref{eq:kirchhoff},
the gradient flow of the energy \eqref{eq:energydisc} constrained by \eqref{eq:kirchhoff}
can be written explicitly in terms of an ODE system for the conductivities $\CC_{ij}$,
\(  \label{ODE}
   \tot{\CC_{ij}}{t} = \left( \frac{(P_j-P_i)^2}{L_{ij}^2} - M'(\CC_{ij}) \right) L_{ij} \qquad \mbox{for }(i,j)\in \Eset^h.
\)
This system, introduced in \cite{Hu-Cai} and studied in more detail in \cite{BHMR},
represents an adaptation model which dynamically responds to local information
and can naturally incorporate fluctuations in the flow.

\begin{figure}
{\centering
\resizebox*{0.5\linewidth}{!}{\includegraphics{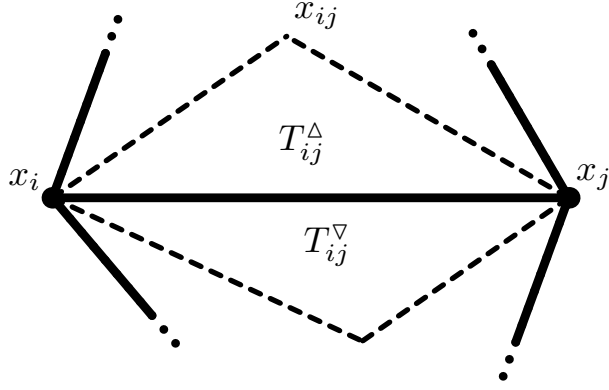}}
\par}
\caption{Edge $e_{ij}\in\Eset^h$ (horizontal solid line) and its adjacent diamond $\diamond_{ij}$
(quadrilateral delimited by dashed line). The diamond consists of the triangle $\Tup_{ij}$,
spanned by the vertices $x_i$, $x_j$ and $x_{ij}$, and the triangle $\Tdown_{ij}$.
\label{fig:1}}
\end{figure}

Let us assume that the graphs $\G^h=(\Vset^h,\Eset^h)$ represent a sequence of finite regular triangulations \cite{Rourke} of a bounded,
polygonal domain $\Omega$ in 2D, and denote $x_i\in\R^2$ the location of the vertex $i\in\Vset^h$.
For convenience, in the sequel we shall use both notations $(i,j)\in\Eset^h$ to address edges by the indices
of their adjacent vertices, and $e_{ij}\in\Eset^h$ to address the linear segment in $\R^2$ connecting
them (i.e., the set of all convex combinations of $x_i$ and $x_j$).
For each (unoriented) edge $e_{ij} \in\Eset^h$ we denote by $\diamond_{ij}\subset\R^2$
the adjacent closed diamond, i.e.,
\[
   \diamond_{ij} := \left\{ x\in\Omega;\, \mbox{dist}(x,e_{ij}) \leq \mbox{dist}(x,\tilde e) \mbox{ for all } \tilde e\in\Eset^h \right\}.
\]
Since the graph $\G^h=(\Vset^h,\Eset^h)$ represents a triangulation of $\Omega$, the diamonds are quadrilaterals;
for edges $e_{ij}$ belonging to the boundary $\partial\Omega$ the definition of $\diamond_{ij}$
has to be adjusted accordingly, such that the corresponding diamond is a quadrilateral too.
For each $i\in\Vset^h$ we denote $U_i$ the union of diamonds adjacent to the vertex $x_i$,
\[
   U_i := \bigcup_{j\in \NN(i)} \diamond_{ij}.
\]
Each diamond $\diamond_{ij}$ consists of two triangles sharing the edge $e_{ij}$,
which we denote $\Tup_{ij}$ and $\Tdown_{ij}$,
see Fig. \ref{fig:1}.
Let us denote $\Th$ the collection of all such triangles, i.e.,
\(   \label{triangulation}
   \Th := \{ \Tup_{ij}; \, (i,j)\in\Eset^h \} \cup \{ \Tdown_{ij}; \, (i,j)\in\Eset^h \}.
\)
Clearly, $\Th$ represents a triangulation of the domain $\Omega$.

The main idea of our construction is to map the vector of discrete edge conductivities $\CC=(\CC_{ij})_{(i,j)\in\Eset^h}$ 
onto the set of piecewise constant tensor fields on $\Omega$ through the mapping $\Qh$
defined as
\(    \label{Qh}
   \Qh[\CC](x) := \CC_{ij} \frac{(x_i-x_j) \otimes (x_i-x_j)}{|x_i-x_j|^2} \qquad\mbox{for all } x\in \diamond_{ij}.
\)
I.e., $\Qh[\CC]$ is a constant rank-one tensor on each diamond $\diamond_{ij}$,
with principal direction (eigenvector) parallel to the edge $e_{ij}$ and the corresponding eigenvalue
equal to the conductivity $\CC_{ij}$.
Note that under the assumption $\CC_{ij}>0$ for all $(i,j)\in\Eset$, the tensor $\Qh[\CC]$
is uniformly positive definite on $\Omega$.

Our strategy is, by means of the mapping $\Qh$, to establish a connection between a properly rescaled
version of the discrete energy functional \eqref{eq:energydisc} and its continuum counterpart \eqref{energy}.
Since we only carry out formal arguments in this section, we set $D:=0$ for simplicity,
and also remove the regularization in the permeability tensor $\P[\A]$,
i.e., set $r:=0$ so that $\P[C]=C$. The continuum energy reads then
\( \label{energy:0}
   \E[\A] := \int_\Omega c^2 \grad p[\A] \cdot \A \grad p[\A]  + M(|\A| ) \d x,
\)
where $p[\A] \in H^1(\Omega)$ is the unique solution (up to an additive constant)
of the Poisson equation \eqref{eq:Poisson} with the uniformly positive definite
permeability tensor $\A\in L^\infty(\Omega)$, subject to the homogeneous Neumann
boundary condition \eqref{BC_0}.
Following the strategy of \cite{HKM2}, the connection between the properly rescaled version of the discrete \eqref{eq:energydisc}
and continuum \eqref{energy:0} energy functionals will be established through an "intermediate"
functional $\E^h$, where $p[\A]$ is replaced by a solution of a finite element discretization of the Poisson
equation.

\subsection{Finite element discretization of the Poisson equation and validity of the Kirchhoff law}\label{subsec:FEP}

We consider the first order $H^1$-finite element approximation of the Poisson equation \eqref{eq:Poisson}
with uniformly positive definite permeability tensor $\Qh[\CC]$,
\(   \label{eq:PforFEM}
   - \grad\cdot (\Qh[\CC] \grad p) = S
\)
subject to the no-flux boundary condition on $\partial\Omega$.
We denote $W^h$ the space of continuous, piecewise linear functions on the triangulation $\Th$ introduced in \eqref{triangulation}, i.e.,
\[
   W^h := \left\{ \psi\in C(\Omega);\, \psi \mbox{ linear on each } T\in \Th \right\}.
\]
The finite element discretization of \eqref{eq:PforFEM} reads then
\(   \label{eq:PFEM}
   \int_\Omega \grad \psi^h \cdot \Qh[\CC] \grad p^h \d x = \int_\Omega S \psi^h \d x
      \qquad \mbox{for  all } \psi^h \in W^h.
\)
Using standard arguments (coercivity and continuity of the corresponding bilinear
form) we construct a solution $p^h\in W^h$ of \eqref{eq:PFEM}, unique up to an additive constant;
without loss of generality we fix $\int_\Omega p^h \d x = 0$.
The solution is represented by its vertex values $P_i^h := p^h(x_i)$, $i\in\Vset$.
Moreover, $\grad p^h$ is constant on each triangle $T\in\Th$.

Let us now fix $i\in\Vset$ and denote $\psi_i^h \in W^h$ a test function supported on the union $U_i$ of diamonds adjacent to the vertex $x_i$.
Since $\Qh[\CC]$ is constant on each diamond $\diamond_{ij}$, we have
\[
      \int_\Omega \grad \psi_i^h \cdot \Qh[\CC] \grad p^h \d x
          = \sum_{j\in\NN(i)} \CC_{ij}  \int_{\diamond_{ij}} \grad \psi_i^h \cdot \frac{(x_i-x_j) \otimes (x_i-x_j)}{|x_i-x_j|^2} \grad p^h \d x.
\]
Since $\grad p^h$ is constant on each $T\in\Th$, the Taylor expansion gives
\(   \label{p:Taylor}
   p^h(x_i) - p^h(x_j) = \grad p^h \cdot (x_i - x_j) \qquad \mbox{for all } (i,j) \in\Eset^h,
\)
and, consequently,
\[
   \int_{\diamond_{ij}} \grad\psi_i^h \cdot \frac{(x_i-x_j) \otimes (x_i-x_j)}{|x_i-x_j|^2}\grad p^h \d x
   &=& \int_{\diamond_{ij}}  \grad p^h \cdot \frac{x_i-x_j}{|x_i-x_j|}  \grad\psi_i^h \cdot \frac{x_i-x_j}{|x_i-x_j|} \d x \\
   &=&
   \frac{p^h(x_i)-p^h(x_j)}{|x_i-x_j|}
   \int_{\diamond_{ij}} \grad\psi_i^h \cdot \frac{x_i-x_j}{|x_i-x_j|}  \d x.
\]
Now, for each triangle $T \subset U_i$, we denote its spanning vertices $x_i$, $x_j$ and $x_{ij}$,
and define $\psi_i^h$ by prescribing the vertex values
\[
   \psi^h_i(x_i) = 1, \qquad \psi^h_i(x_j) = 0, \qquad \psi^h_i(x_{ij}) = 0.
\]
This leads to the explicit formula for $\psi^h_i$ on each $T$,
\[
   \psi_i^h(x) := n_{ij}^T\cdot (x-x_j) \qquad\mbox{for } x \in T,
\]
where the vector $n_{ij}^T$ is such that
\[
   n_{ij}^T \cdot (x_{ij} - x_j) = 0, \qquad n_{ij}^T \cdot (x_i-x_j) = 1.
\]
Obviously, such vector exists (and is unique) for any non-collinear triplet $(x_i, x_j, x_{ij})$.
Since $\grad\psi_i^h \equiv  n_{ij}^T$ on each $T$, we have
\[
   \int_T \grad\psi_i^h \cdot \frac{x_i-x_j}{|x_i-x_j|}  \d x 
    = \frac{n_{ij}^T \cdot (x_i-x_j)}{|x_i-x_j|} \mbox{vol}(T)
      = \frac{\mbox{vol}(T) }{L_{ij}}.
\]
Finally, recalling that each $\diamond_{ij}$ consists of two triangles (sharing the edge $e_{ij}\in\Eset^h$), we have
\[
   \int_\Omega \grad\psi_i^h \cdot \Qh[\CC] \grad p^h \d x = \sum_{j\in \NN(i)} \CC_{ij} \frac{p^h(x_i)-p^h(x_j)}{L_{ij}} \frac{\mbox{vol}(\diamond_{ij})}{L_{ij}}.
\]
Therefore, denoting
\(  \label{Si}
   S^h_i := \int_{U_i} S \psi^h_i \d x \qquad\mbox{for all } i\in\Vset,
\)
we have the following proposition.

\begin{proposition}\label{prop:FEM}
Let $S\in L^2(\Omega)$ with $\int_\Omega S\d x = 0$ and let $S_i$ be given by \eqref{Si}.
Let $\CC_{ij}>0$ for all $(i,j)\in\Eset^h$ and denote $P_i^h := p^h(x_i)$,
with $p^h$ being the solution of the discretized Poisson equation \eqref{eq:PFEM}.
Then we have, for all $i\in\Vset$,
\(  \label{Kresc}
     - \sum_{j\in \NN(i)} \CC_{ij} \frac{P_j^h-P_i^h}{L_{ij}} \frac{\mbox{vol}(\diamond_{ij})}{L_{ij}} = S_i.
\)
\end{proposition}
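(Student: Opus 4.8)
The plan is to read off the identity \eqref{Kresc} directly from the finite element weak formulation \eqref{eq:PFEM} by testing it against a single, carefully chosen function, namely the continuous piecewise-linear nodal hat function $\psi_i^h\in W^h$ associated with the vertex $x_i$. This function is characterised by $\psi_i^h(x_i)=1$ and $\psi_i^h(x_k)=0$ at every other vertex $x_k$, so that it is supported precisely on the patch $U_i$ of diamonds adjacent to $x_i$. With this choice the right-hand side of \eqref{eq:PFEM} collapses, by the very definition \eqref{Si} of $S_i$ together with the support property, to
\[
   \int_\Omega S\,\psi_i^h \d x = \int_{U_i} S\,\psi_i^h \d x = S_i,
\]
so it remains only to identify the left-hand side with the left-hand side of \eqref{Kresc}.

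For the bilinear form I would exploit the two structural facts established above: $\Qh[\CC]$ is a constant rank-one tensor on each diamond $\diamond_{ij}$, given explicitly by \eqref{Qh}, while $\grad p^h$ is constant on each triangle of $\Th$. Splitting the integral over $U_i$ into a sum over the diamonds $\diamond_{ij}$ with $j\in\NN(i)$, and inserting the rank-one form of $\Qh[\CC]$, reduces each diamond contribution to a product of the scalar projections of $\grad p^h$ and $\grad\psi_i^h$ onto the unit edge direction $(x_i-x_j)/|x_i-x_j|$. Using the Taylor identity \eqref{p:Taylor}, $p^h(x_i)-p^h(x_j)=\grad p^h\cdot(x_i-x_j)$, the factor carrying $\grad p^h$ becomes the discrete difference quotient $(p^h(x_i)-p^h(x_j))/L_{ij}$, which is constant and can be pulled out of the integral. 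The remaining purely geometric integral is then evaluated triangle by triangle: on each triangle $T\subset U_i$ with vertices $x_i,x_j,x_{ij}$ the gradient of $\psi_i^h$ is the constant vector $n_{ij}^T$ fixed by $n_{ij}^T\cdot(x_i-x_j)=1$ and $n_{ij}^T\cdot(x_{ij}-x_j)=0$, whence $\int_T \grad\psi_i^h\cdot\frac{x_i-x_j}{|x_i-x_j|}\d x=\mbox{vol}(T)/L_{ij}$.

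Finally I would reassemble the pieces: since each diamond $\diamond_{ij}$ consists of the two triangles $\Tup_{ij}$ and $\Tdown_{ij}$, summing the triangle contributions replaces $\mbox{vol}(T)$ by $\mbox{vol}(\diamond_{ij})$, yielding
\[
   \int_\Omega \grad\psi_i^h\cdot\Qh[\CC]\grad p^h\d x
   &=& \sum_{j\in\NN(i)} \CC_{ij}\,\frac{p^h(x_i)-p^h(x_j)}{L_{ij}}\,\frac{\mbox{vol}(\diamond_{ij})}{L_{ij}}.
\]
Equating this with $S_i$, writing $P_i^h=p^h(x_i)$ and $P_j^h=p^h(x_j)$, and absorbing the sign from $p^h(x_i)-p^h(x_j)=-(P_j^h-P_i^h)$ gives exactly \eqref{Kresc}. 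I do not expect any genuine analytic obstacle here, since positivity of the $\CC_{ij}$ already guarantees coercivity of the bilinear form and hence a well-defined $p^h$; the only points requiring care are geometric bookkeeping. Specifically, one must check that the local prescriptions for $\psi_i^h$ on the individual triangles glue into a single continuous, globally defined nodal function (continuity across shared edges holds because the prescribed vertex values agree), that $n_{ij}^T$ is well defined whenever the triangles are non-degenerate, and that the patch $U_i$ decomposes cleanly into the diamonds indexed by $\NN(i)$, so that the sum over triangles reorganises into the stated sum over edges.
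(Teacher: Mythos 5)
Your proposal is correct and follows essentially the same route as the paper: testing the FEM formulation \eqref{eq:PFEM} with the nodal hat function $\psi_i^h$, exploiting the rank-one, piecewise-constant structure of $\Qh[\CC]$ together with the Taylor identity \eqref{p:Taylor}, and evaluating the geometric integral triangle by triangle so that the two triangles of each diamond recombine into $\mbox{vol}(\diamond_{ij})$. The additional bookkeeping points you flag (gluing of the local prescriptions into a continuous hat function, non-degeneracy of the triangles, decomposition of $U_i$ into diamonds) are exactly the implicit hypotheses of the paper's computation.
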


Note that \eqref{Kresc} is a rescaled version of the Kirchhoff law \eqref{eq:kirchhoff}
by the factor $\frac{\mbox{vol}(\diamond_{ij})}{L_{ij}}$.

\subsection{Semi-discrete energy functional}\label{subsec:semi-disc}
For uniformly positive definite permeability tensors $\A\in L^\infty(\Omega)$
we introduce the semi-discrete energy functional
\(    \label{def:EEh}
    \E^h[\A] = \int_\Omega \grad p^h[\A] \cdot \A\grad p^h[\A] + M(|\A|) \d x,
\)
where $p^h[\A]\in W^h$ the unique solution of the FEM-approximation of the Poisson equation \eqref{eq:PFEM}
with the permeability tensor $\A$.
We show that the value of $\E^h$ evaluated at the piecewise constant tensor $\Qh[\CC]$
can be identified with the value of the rescaled version of the discrete energy functional \eqref{eq:energydisc},
given by
\(  \label{def:en:disc:resc}
   \bar E^h[\CC] = \sum_{(i,j)\in\Eset^h} \frac{\mbox{vol}(\diamond_{ij})}{L_{ij}} \left( \CC_{ij} \frac{(P^h_i - P^h_j)^2}{L_{ij}} + M(\CC_{ij}) \right) L_{ij}.
\)

\begin{proposition}\label{prop:En}
For any vector of positive conductivities $\CC = (\CC_{ij})_{(i,j)\in\Eset^h}$ we have
\(
   \bar E^h[\CC] = \E^h\left[\Qh[\CC] \right],
\)
with the rescaled discrete energy functional $\bar E^h$ defined in \eqref{def:en:disc:resc} and
the semi-discrete energy $\E^h$ given by \eqref{def:EEh}.
\end{proposition}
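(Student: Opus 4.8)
The plan is to substitute $\A=\Qh[\CC]$ into the semi-discrete energy \eqref{def:EEh} and to exploit the fact that both $\Qh[\CC]$ and $\grad p^h[\Qh[\CC]]$ interact with the geometry in a piecewise-constant fashion, so that the integral over $\Omega$ collapses into a sum over the diamonds $\diamond_{ij}$. Since the diamonds tile $\Omega$ up to a set of measure zero, I would begin by writing $\int_\Omega (\cdots)\d x = \sum_{(i,j)\in\Eset^h} \int_{\diamond_{ij}} (\cdots)\d x$ for both the kinetic and the metabolic integrand in $\E^h[\Qh[\CC]]$, and then evaluate each diamond contribution separately. Denote $p^h := p^h[\Qh[\CC]]$ and $P_i^h := p^h(x_i)$; note that $\E^h[\Qh[\CC]]$ depends only on $\grad p^h$, hence is insensitive to the normalization constant of $p^h$, so it is well defined.

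For the metabolic term, recall that on each $\diamond_{ij}$ the tensor $\Qh[\CC]$ is the constant rank-one matrix $\CC_{ij}\, u\otimes u$ with $u := (x_i-x_j)/|x_i-x_j|$ a unit vector. A direct computation gives $|u\otimes u| = \sqrt{(u\otimes u):(u\otimes u)} = |u|^2 = 1$, so that $|\Qh[\CC]| = \CC_{ij}$ on $\diamond_{ij}$ (here $\CC_{ij}>0$ removes any sign ambiguity). Consequently the metabolic integrand is constant on each diamond and $\int_{\diamond_{ij}} M(|\Qh[\CC]|)\d x = \mbox{vol}(\diamond_{ij})\, M(\CC_{ij})$, which reproduces the metabolic summand of $\bar E^h[\CC]$.

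The kinetic term is where the one genuine subtlety lies, and I expect it to be the main obstacle. On $\diamond_{ij}$ we have $\grad p^h \cdot \Qh[\CC]\grad p^h = \CC_{ij}\,(\grad p^h\cdot(x_i-x_j))^2/|x_i-x_j|^2$, i.e.\ the tensor only probes the component of $\grad p^h$ along the edge direction. Now $\grad p^h$ is constant on each triangle of $\Th$ but is generally \emph{different} on the two triangles $\Tup_{ij}$ and $\Tdown_{ij}$ composing $\diamond_{ij}$; the key point is that both these triangles share the edge $e_{ij}$, hence both have $x_i$ and $x_j$ among their vertices. Applying the affine (Taylor) identity \eqref{p:Taylor} on each of them yields $\grad p^h\cdot(x_i-x_j) = P_i^h - P_j^h$ on \emph{both} triangles, so the edge-directional derivative, and therefore the whole kinetic integrand, is the single constant $\CC_{ij}(P_i^h-P_j^h)^2/L_{ij}^2$ across the entire diamond. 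Integrating gives $\int_{\diamond_{ij}} \grad p^h\cdot\Qh[\CC]\grad p^h\,\d x = \mbox{vol}(\diamond_{ij})\,\CC_{ij}(P_i^h-P_j^h)^2/L_{ij}^2$.

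Summing the two contributions over all edges $(i,j)\in\Eset^h$ and collecting the $\mbox{vol}(\diamond_{ij})$ and $L_{ij}$ factors reproduces the rescaled discrete energy $\bar E^h[\CC]$ of \eqref{def:en:disc:resc}, which finishes the proof. I do not expect the bookkeeping of the $L_{ij}$-powers to cause trouble; the only place requiring care is the cancellation described above, namely that although $\grad p^h$ jumps across the interface $e_{ij}$ between $\Tup_{ij}$ and $\Tdown_{ij}$, its projection onto the edge direction does not, which is precisely what makes the anisotropic rank-one tensor $\Qh[\CC]$ produce a diamond-wise constant integrand.
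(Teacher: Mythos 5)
Your proof is correct and follows essentially the same route as the paper: decompose $\Omega$ into diamonds, use $|\Qh[\CC]|=\CC_{ij}$ for the metabolic term, and use the affine identity \eqref{p:Taylor} to turn the edge-directional derivative of $p^h$ into the difference quotient $(P_i^h-P_j^h)/L_{ij}$ for the kinetic term. Your explicit remark that $\grad p^h$ may jump across $e_{ij}$ while its projection onto the edge direction agrees on both triangles $\Tup_{ij}$ and $\Tdown_{ij}$ is a point the paper leaves implicit, and is worth stating.
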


\begin{proof}
First, we note that, for $x\in \diamond_{ij}$,
\[
   |\Qh[\CC](x)| = \CC_{ij} \left| \frac{(x_i-x_j) \otimes (x_i-x_j)}{|x_i-x_j|^2} \right| = \CC_{ij}.
\]
Consequently, for the metabolic term we have
\[
     \int_\Omega M(|\Qh[\CC]|) \d x
      &=& \sum_{(i,j)\in\Eset^h} \int_{\diamond_{ij}} M(|\Qh[\CC]|) \d x \\
      &=& \sum_{(i,j)\in\Eset^h} \int_{\diamond_{ij}} M(\CC_{ij}) \d x
      = \sum_{(i,j)\in\Eset^h} \mbox{vol}(\diamond_{ij}) \, M(\CC_{ij}),
\]
where,
again, the sum is taken over unoriented edges so that each $(i,j) \equiv (j,i)$ is counted only once.

For the pumping term we have
\[
   \int_\Omega \grad p^h \cdot \Qh[\CC]\grad p^h \d x
     = \sum_{(i,j)\in\Eset^h} \CC_{ij} \int_{\diamond_{ij}} \grad p^h \cdot \frac{(x_i-x_j) \otimes (x_i-x_j)}{|x_i-x_j|^2} \grad p^h \d x.
\]
Using \eqref{p:Taylor} again, we have
\[
   \int_{\diamond_{ij}} \grad p^h \cdot \frac{(x_i-x_j) \otimes (x_i-x_j)}{|x_i-x_j|^2} \grad p^h \d x =
   \int_{\diamond_{ij}} \frac{|\grad p^h\cdot (x_i-x_j)|^2}{|x_i-x_j|^2} \d x =
   \mbox{vol}(\diamond_{ij}) \frac{(p^h(x_i) - p^h(x_j))^2}{|x_i-x_j|^2},
\]
so that, denoting again $P_i:=p^h(x_i)$,
\[
   \int_\Omega \grad p^h \cdot \Qh[\CC]\grad p^h \d x
    = \sum_{(i,j)\in\Eset} \CC_{ij} \frac{\mbox{vol}(\diamond_{ij})}{L_{ij}} \frac{(P^h_i - P^h_j)^2}{L_{ij}}.
\]
\end{proof}

Following the strategy of \cite{HKM2}, we would proceed by proving the $\Gamma$-convergence
of the sequence of functionals $\E^h$ towards $\E$ as $h\to 0$.
In \cite{HKM2} this was carried out for the special case of $\G^h$ being a rectangular grid,
and the mapping $\Qh$ was constructed to generate piecewise constant diagonal permeability tensors $\A$
with diagonal elements corresponding to the conductivities of the horizontal and vertical edges.
This peculiar construction facilitated the proof of $\Gamma$-convergence
of the sequence $\E^h$ with respect to the norm topology of $L^2(\Omega)$.
However, in our setting, the sequence $\Qh[\CC]$ given by \eqref{Qh} does not converge in the norm topology as $h\to 0$.
Indeed, considering the special case of $\G^h$ being a regular triangulation of $\Omega$,
with, say, $\CC_{ij}=1$ for all $(i,j)\in\Eset^h$, the sequence $\Qh[\CC]$ is a sequence
of periodic functions on $\Omega$, which clearly does not converge in any norm topology.
Instead, we only have weak convergence, which poses significant analytical challenges for proving the $\Gamma$-convergence.
In particular, the so-called weak-strong lemma for the Poisson equation, see \cite[Lemma 1]{HKM2},
which is a fundamental ingredient of the proof, cannot be directly applied. Consequently, new approaches need to be developed
to address this problem, which will be subject of future work.
In this paper we only prove the following formal link between the (rescaled) discrete energy $\bar E^h[\CC]$ and its continuum counterpart $\E$.

\begin{proposition}
Let $\CC = (\CC_{ij})_{(i,j)\in\Eset^h}$ be a vector of positive conductivities on $\Eset^h$.
Then
\(   \label{En:connection}
   \bar E^h[\CC] = \E^h[\Qh[\CC]] = \E[\Qh[\CC]] + O(h^2),
\)
with respect to the $H^1_0(\Omega)$-topology,
with the discrete energy functional $\bar E^h$ defined in \eqref{def:en:disc:resc},
the semi-discrete functional $\E^h$ defined in \eqref{def:EEh} and the continuum energy $\E$ defined in \eqref{energy:0}.
\end{proposition}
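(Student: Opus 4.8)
The first identity $\bar E^h[\CC] = \E^h[\Qh[\CC]]$ is exactly the content of Proposition~\ref{prop:En}, so the task reduces to the second equality $\E^h[\Qh[\CC]] = \E[\Qh[\CC]] + O(h^2)$. The plan is to recognise the gap between the semi-discrete and the continuum energy as the energy-norm error of the finite element method, and then to estimate it by the standard C\'ea/interpolation argument. Throughout I fix $\A := \Qh[\CC]$ and write $a(u,v) := c^2\int_\Omega \grad u \cdot \A \grad v \d x$ for the (symmetric, positive) bilinear form attached to $\A$, into which I absorb the kinetic constant $c^2$ common to both pumping terms. Comparing \eqref{def:EEh} with \eqref{energy:0}, the metabolic contributions $\int_\Omega M(|\A|)\d x$ are literally identical and cancel, so that
\[
   \E^h[\A] - \E[\A] &=& a(p^h,p^h) - a(p,p),
\]
where $p = p[\A]$ is the exact Neumann solution of \eqref{eq:Poisson} and $p^h = p^h[\A]\in W^h$ its finite element approximation \eqref{eq:PFEM}.

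First I would exploit that $p$ and $p^h$ solve the same weak problem, tested over $H^1(\Omega)$ and over the subspace $W^h \subset H^1(\Omega)$ respectively. Subtracting the two weak formulations yields the Galerkin orthogonality $a(p - p^h, v^h) = 0$ for every $v^h \in W^h$. Choosing $v^h = p^h$ gives $a(p,p^h) = a(p^h,p^h)$, whence the Pythagorean identity
\[
   a(p,p) - a(p^h,p^h) &=& a(p,p-p^h) = a(p-p^h,p-p^h) = \Norm{p-p^h}_a^2 \geq 0.
\]
Consequently $\E^h[\A] - \E[\A] = -\Norm{p-p^h}_a^2$ is, up to sign, precisely the square of the energy-norm error of the Galerkin scheme (so the semi-discrete pumping energy always underestimates the continuum one), and it only remains to show $\Norm{p-p^h}_a = O(h)$.

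For this last step I would use the best-approximation (C\'ea) property $\Norm{p-p^h}_a \leq \Norm{p - I^h p}_a$, with $I^h p \in W^h$ the piecewise-linear nodal interpolant of $p$ on the triangulation $\Th$. Estimating $\Norm{v}_a^2 \leq c^2\Norm{\A}_{L^\infty} |v|_{H^1(\Omega)}^2$ and invoking the standard first-order interpolation bound $|p - I^h p|_{H^1(\Omega)} \leq C h\,|p|_{H^2(\Omega)}$ then gives $\Norm{p-p^h}_a \leq C c\,\Norm{\A}_{L^\infty}^{1/2}\,h\,|p|_{H^2(\Omega)}$, so that the energy defect is $O(h^2)$, the error being measured through the $H^1$-seminorm of $p-p^h$, consistent with the $H^1_0(\Omega)$-topology quoted in the statement.

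The hard part is the regularity of $p[\A]$ that this interpolation estimate presupposes. The coefficient $\A = \Qh[\CC]$ is piecewise constant, hence discontinuous across the diamond interfaces, and moreover rank-one on each diamond, so that $-\div(\A\grad\,\cdot\,)$ is simultaneously non-smooth and degenerate elliptic; global $H^2$-regularity of $p$ fails in general and even $h$-uniform piecewise-$H^2$ bounds are delicate. In the formal spirit of this section I would therefore carry out the estimate under an a priori smoothness assumption on $p$ guaranteeing the $O(h)$ interpolation rate. Making this rigorous in the refinement limit --- reconciling the clean $O(h^2)$ scaling with the degeneracy and discontinuity of $\Qh[\CC]$, the very features that also prevent norm convergence of $\Qh[\CC]$ and direct use of the weak--strong lemma --- is exactly the difficulty deferred to the future $\Gamma$-convergence analysis.
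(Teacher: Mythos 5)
Your argument coincides with the paper's own proof: both reduce the first identity to Proposition \ref{prop:En} and obtain the second from Galerkin orthogonality, the resulting Pythagorean identity $\left|p\right|_B^2-\left|p^h\right|_B^2=\left|p-p^h\right|_B^2$ in the energy (semi)norm, C\'ea's best-approximation property, and the first-order interpolation estimate. Your closing caveat --- that the $O(h)$ interpolation rate presupposes regularity of $p[\Qh[\CC]]$ which the piecewise-constant, rank-one (hence degenerate) coefficient does not guarantee --- is a point the paper passes over by simply invoking ``standard results of approximation theory,'' and it is an honest acknowledgment of the formal character of this step.
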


\begin{proof}
Since $\CC$ has all positive entries, $\A:=\Qh[\CC]$ is a symmetric, uniformly positive definite tensor field
with $\A\in L^\infty(\Omega)$.
Let us define the bilinear form $B: H^1(\Omega)\times H^1(\Omega) \to \R$,
\[
   B(u,v) = \int_\Omega \grad u \cdot \A \grad v \d x.
\]
Clearly, $B$ induces a seminorm on $H^1(\Omega)$,
\[
   \left|u\right|_{B} := \sqrt{B(u,u)}\qquad\mbox{for } u \in H^1(\Omega).
\]
With this notation we have
\[
   \int_\Omega \grad p \cdot \A \grad p \d x = \left|p\right|_{B}^2.
\]
We now proceed along the lines of standard theory of the finite element method (proof of C\'ea's Lemma in the energy norm, see, e.g., \cite{Ciarlet}).
Due to the Galerkin orthogonality
\(  \label{GO}
   B(p - p^h, \psi) = 0\qquad\mbox{for all }\psi\in W^h,
\)
we have, noting that $p^h\in W^h$,
\[
   \left|p\right|_{B}^2 - \left|p^h\right|_{B}^2 = \left|p-p^h\right|_{B}^2.
\]
Then, again by \eqref{GO} and by the Cauchy-Schwartz inequality, we have for all $\psi\in W^h$,
\[
   \left|p-p^h\right|_{B}^2 = B(p-p^h, p-\psi)
      \leq \left|p-p^h\right|_{B}  \left| p-\psi \right|_{B},
\] 
and, consequently,
\[
   \left|p-p^h\right|_{B} \leq \inf_{\psi\in W^h} \left|p-\psi\right|_{B}.
\]
Then, by standard results of approximation theory, see, e.g., \cite{Ciarlet}, we have
$\inf_{\psi\in W^h} \left|p-\psi\right|_{B} = O(h)$ and, consequently,
\[
     \E[\A] - \E^h[\A] = \left|p\right|_{B}^2 - \left|p^h\right|_{B}^2 
     = \left|p-p^h\right|_{B}^2 = O(h^2).
\]
An application of Proposition \ref{prop:En} gives the claim \eqref{En:connection}.
\end{proof}

Finally, let us note that both the energy functional \eqref{def:en:disc:resc} and, resp., the Kirchhoff law \eqref{Kresc}
are rescaled by the same factor $\frac{\mbox{vol}(\diamond_{ij})}{L_{ij}}$
with respect to their original definitions \eqref{eq:energydisc} and, resp., \eqref{eq:kirchhoff}.
Consequently, the constrained gradient flow corresponding to the system \eqref{Kresc}, \eqref{def:en:disc:resc}
still admits an explicit description in terms of the ODE
\[
   \tot{\CC_{ij}}{t} = \left( \frac{(P_j-P_i)^2}{L_{ij}^2} - M'(\CC_{ij}) \right) \mbox{vol}(\diamond_{ij}) \qquad \mbox{for }(i,j)\in \Eset^h,
\]
which is a rescaled version of \eqref{ODE}.

\section{Energy dissipation structure, convexity and gradient flow}\label{sec:energy}
A fundamental structural property of the system \eqref{eq:Poisson}--\eqref{eq:A}
is that it represents the contrained $L^2(\Omega)$-gradient flow associated with the energy functional
\eqref{energy}.
The gradient flow structure implies the following energy relation, established in \cite{Hu-Cai-19}.

\begin{proposition}\label{lem:energy}
The energy \eqref{energy} is nonincreasing along smooth solutions of \eqref{eq:Poisson}--\eqref{eq:A} and satisfies
\[
    \tot{}{t} \En_D[\A(t)] = - \int_\Omega \left| \part{\A(t)}{t}(t,x) \right|^2 \d x.
\] 
\end{proposition}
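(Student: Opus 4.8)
The plan is to exploit the gradient-flow structure directly: differentiate $\En_D[\A(t)]$ in time and show that the integrand collapses to $-|\partial_t\A|^2$. Concretely, I would split $\tot{}{t}\En_D$ into the contributions of the Dirichlet term, the pumping term $c^2\int_\Omega \grad p\cdot\P[\A]\grad p$, and the metabolic term, compute each as an integral of some tensor contracted with $\partial_t\A$, and verify that their sum equals $\int_\Omega\big(-D^2\Delta\A - c^2\,\grad p\otimes\grad p + \tfrac{M'(|\A|)}{|\A|}\A\big):\partial_t\A\,\d x$. By the evolution equation \eqref{eq:A} the tensor in parentheses is exactly $-\partial_t\A$, so the integral equals $-\int_\Omega|\partial_t\A|^2\,\d x$, which is the claim.

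First I would dispose of the two straightforward terms. For the Dirichlet term, $\tot{}{t}\int_\Omega\tfrac{D^2}{2}|\grad\A|^2 = \int_\Omega D^2\,\grad\A:\grad\partial_t\A$, and integrating by parts gives $-\int_\Omega D^2\Delta\A:\partial_t\A$; the boundary contribution vanishes because the homogeneous Dirichlet condition \eqref{BC_0} forces $\A=0$, hence $\partial_t\A=0$, on $\partial\Omega$. For the metabolic term the chain rule yields $\tot{}{t}\int_\Omega M(|\A|) = \int_\Omega M'(|\A|)\,\tfrac{\A:\partial_t\A}{|\A|} = \int_\Omega \tfrac{M'(|\A|)}{|\A|}\A:\partial_t\A$, matching the last term of \eqref{eq:A}; here differentiating $|\A|$ is only formal where $\A$ vanishes, consistent with the smooth-solution hypothesis.

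The hard part will be the pumping term, because $p=p[\A]$ depends on $\A$ \emph{implicitly} through the Poisson constraint \eqref{eq:Poisson}. Differentiating and using the symmetry of $\P[\A]=r\I+\A$ gives $\tot{}{t}E_{\mathrm{pump}} = c^2\int_\Omega (\grad p\otimes\grad p):\partial_t\A + 2c^2\int_\Omega \grad\partial_t p\cdot\P[\A]\grad p$, so the obstacle is the $\partial_t p$-integral. To eliminate it I would differentiate the weak form $\int_\Omega\grad\phi\cdot\P[\A]\grad p = \int_\Omega S\phi$ in time at fixed $\phi$ (legitimate since $S$ is time-independent and the Neumann datum is encoded in the weak form), obtaining $\int_\Omega\grad\phi\cdot\partial_t\A\,\grad p + \int_\Omega\grad\phi\cdot\P[\A]\grad\partial_t p = 0$, and then take $\phi=p(t)$. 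Using symmetry of $\P[\A]$ once more, this identifies the troublesome integral as $2c^2\int_\Omega\grad\partial_t p\cdot\P[\A]\grad p = -2c^2\int_\Omega(\grad p\otimes\grad p):\partial_t\A$. The net effect is the crucial sign reversal $\tot{}{t}E_{\mathrm{pump}} = -c^2\int_\Omega(\grad p\otimes\grad p):\partial_t\A$, matching the pumping term of \eqref{eq:A}. This cancellation — the factor $2$ from the quadratic form combined with the time-derivative of the constraint — is where the \emph{constrained} gradient-flow structure does the real work, and is the step demanding the most care; notably no boundary terms arise, since the Neumann condition lives inside the weak formulation.

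Summing the three contributions and invoking \eqref{eq:A} to identify the resulting tensor with $-\partial_t\A$ then gives $\tot{}{t}\En_D = \int_\Omega(-\partial_t\A):\partial_t\A\,\d x = -\int_\Omega|\partial_t\A|^2\,\d x$, as asserted. Throughout I would assume the solution smooth enough, and $\A$ uniformly positive definite so that $p[\A]$ is well defined and the above manipulations are justified, as is implicit in the ``smooth solutions'' hypothesis.
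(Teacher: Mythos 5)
Your proof is correct and is precisely the argument the paper has in mind: the paper's own ``proof'' consists of a single sentence deferring to the obvious modification of \cite[Lemma 1]{HMP15}, and that modification is exactly the three-term splitting plus the differentiation of the Poisson constraint (tested with $p$ to produce the factor-$2$ sign reversal in the pumping term) that you carry out. No gaps; nothing further is needed.
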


\begin{proof}
The proof is obtained by an obvious modification of \cite[Lemma 1]{HMP15}.
\end{proof}

A major observation is that if the metabolic cost function is convex and nondecreasing,
then the energy functional $\En_D$ is convex.
Let us denote $H^1_+(\Omega)$ the set of symmetric, positive semidefinite
matrix-valued functions with entries in $H^1(\Omega)$.
Obviously, $H^1_+(\Omega)$ is a closed, convex subset of $H^1(\Omega)$.

\begin{proposition}\label{prop:convex}
Let $\Omega$ be a bounded domain in $\R^d$ and
denote $C_\Omega$ the corresponding Poincar\'{e} constant.
Let $M:\R^+ \to \R^+$ 
be such that
\(
   \frac{M'(s)}{s} \geq - D^2 C_\Omega \qquad \mbox{if } M''(s) \geq \frac{M'(s)}{s}, \label{cond:M1} \\
   M''(s) \geq - D^2 C_\Omega \qquad \mbox{if } M''(s) < \frac{M'(s)}{s}. \label{cond:M2}
\)
Then the energy functional $\En_D$ given by \eqref{energy} is convex on $H^1_+(\Omega)$.
Moreover, if \eqref{cond:M1}, \eqref{cond:M2} hold with sharp inequalities on the left-hand side, then $\En_D$ is strictly convex.
\end{proposition}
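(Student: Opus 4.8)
The plan is to use the additive structure $\En_D[\A]=\mathrm{(I)}+\mathrm{(II)}+\mathrm{(III)}$ with
\begin{equation*}
\En_D[\A] = \underbrace{\frac{D^2}{2}\int_\Omega |\grad\A|^2\d x}_{\mathrm{(I)}} + \underbrace{c^2\int_\Omega \grad p[\A]\cdot\P[\A]\grad p[\A]\d x}_{\mathrm{(II)}} + \underbrace{\int_\Omega M(|\A|)\d x}_{\mathrm{(III)}},
\end{equation*}
and to prove convexity by checking that along any segment the second variation is nonnegative. Concretely, I would fix $\A,\A'\in H^1_+(\Omega)$, set $B:=\A'-\A$, and show $\frac{\d^2}{\d t^2}\En_D[\A+tB]\ge 0$. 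Since both endpoints satisfy the homogeneous Dirichlet condition $\A=0$ on $\partial\Omega$, the difference $B\in H^1_0(\Omega)$, so the Poincaré inequality $\int_\Omega|\grad B|^2\d x \ge C_\Omega\int_\Omega|B|^2\d x$ is at my disposal. Term (I) is a nonnegative quadratic form and hence convex on its own; the work is to show that (II) is convex and that (I)+(III) has nonnegative second variation, the latter being where the hypotheses \eqref{cond:M1}--\eqref{cond:M2} are consumed.

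For the nonlocal pumping term (II) I would bypass the implicit dependence $p=p[\A]$ by a duality argument. Testing \eqref{eq:Poisson} with $p[\A]$ gives $\int_\Omega\grad p[\A]\cdot\P[\A]\grad p[\A]\d x=\int_\Omega S\,p[\A]\d x$, and $p[\A]$ is the maximizer of the concave quadratic $\psi\mapsto 2\int_\Omega S\psi\d x-\int_\Omega\grad\psi\cdot(r\I+\A)\grad\psi\d x$, whose maximal value equals $\int_\Omega S\,p[\A]\d x$. Hence
\begin{equation*}
\tfrac{1}{c^2}\,\mathrm{(II)}=\sup_{\psi\in H^1(\Omega)}\Big(2\int_\Omega S\psi\d x-\int_\Omega\grad\psi\cdot(r\I+\A)\grad\psi\d x\Big).
\end{equation*}
For each fixed $\psi$ the bracket is affine in $\A$ (its only $\A$-dependence is the linear term $-\int_\Omega\grad\psi\cdot\A\grad\psi\d x$), so (II) is a pointwise supremum of affine functionals of $\A$ and is therefore convex, with no condition on $M$ required.

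The hypotheses enter through (I)+(III). Viewing $\A\mapsto M(|\A|)$ as a radial function on the Frobenius-Euclidean space of symmetric matrices, I would compute its pointwise Hessian at $\A$ with $N:=|\A|>0$: it has eigenvalue $M''(N)$ in the radial direction $\A/N$ and eigenvalue $M'(N)/N$ on the orthogonal complement. Thus the second variation of (III) in direction $B$ satisfies
\begin{align*}
&\int_\Omega\Big[M''(N)\tfrac{(\A:B)^2}{N^2}+\tfrac{M'(N)}{N}\Big(|B|^2-\tfrac{(\A:B)^2}{N^2}\Big)\Big]\d x \\
&\qquad\ge \int_\Omega\min\Big\{M''(N),\tfrac{M'(N)}{N}\Big\}\,|B|^2\d x \ge -D^2C_\Omega\int_\Omega|B|^2\d x,
\end{align*}
the first inequality because $(\A:B)^2/(N^2|B|^2)\in[0,1]$ by Cauchy--Schwarz renders the bracket $|B|^2$ times a convex combination of the two eigenvalues, and the second because \eqref{cond:M1}--\eqref{cond:M2} together are exactly the statement $\min\{M''(s),M'(s)/s\}\ge-D^2C_\Omega$ for all $s>0$. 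Adding the second variation $D^2\int_\Omega|\grad B|^2\d x$ of (I) and applying Poincaré yields $D^2\int_\Omega|\grad B|^2\d x-D^2C_\Omega\int_\Omega|B|^2\d x\ge 0$, proving convexity of (I)+(III) and hence of $\En_D$. Strict inequalities in \eqref{cond:M1}--\eqref{cond:M2} upgrade this to $>0$ for $B\neq0$, giving strict convexity.

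I expect the crux to be the (I)+(III) balance rather than the nonlocal term: one must correctly identify the two distinct Hessian eigenvalues of the radial map $\A\mapsto M(|\A|)$, recognize that the worst direction selects their minimum (which is precisely why the two regimes $M''\gtrless M'/s$ generate the two separate conditions), and match this minimum against the Dirichlet term through the \emph{sharp} Poincaré constant $C_\Omega$. A secondary technical point I would need to address is the degeneracy of $M(|\A|)$ on the set $\{\A=0\}$, where the tangential eigenvalue $M'(N)/N$ is defined only through its limit; I would treat this set by continuity/approximation, using the value $M''(0)$ there.
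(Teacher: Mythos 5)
Your argument is correct, and on the metabolic-plus-diffusion part it is essentially the paper's own proof: the paper likewise computes the pointwise second variation of $\A\mapsto M(|\A|)$ as $M''(|\A|)\bigl|\tfrac{\A}{|\A|}:\Phi\bigr|^2+\tfrac{M'(|\A|)}{|\A|}\bigl(|\Phi|^2-\bigl|\tfrac{\A}{|\A|}:\Phi\bigr|^2\bigr)$, takes the worst of the two eigenvalues by splitting $\Omega$ into the set where $M''\geq M'/s$ and its complement (which is exactly your $\min\{M'',M'/s\}$ formulation), and absorbs it into $D^2\int_\Omega|\grad\Phi|^2\d x$ via Poincar\'e. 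Where you genuinely diverge is the pumping term: the paper first rewrites it as $c^2\int_\Omega Sp[\A]\d x$ and then differentiates the weak formulation of \eqref{eq:Poisson} twice in the direction $\Phi$, arriving at the explicit identity $\tfrac{\delta^2}{\delta\A^2}\int_\Omega Sp[\A]\d x=2\int_\Omega\grad\tfrac{\delta p}{\delta\A}\cdot(r\I+\A)\grad\tfrac{\delta p}{\delta\A}\d x\geq 0$; you instead invoke the Dirichlet principle to represent $c^{-2}(\mathrm{II})=\sup_\psi\bigl(2\int_\Omega S\psi\d x-\int_\Omega\grad\psi\cdot(r\I+\A)\grad\psi\d x\bigr)$ as a supremum of functionals affine in $\A$. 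Your route is shorter, needs no formal functional derivatives of $p[\A]$, and is rigorous at the level of weak solutions; the paper's computation is heavier but delivers the exact value of the second variation rather than just its sign, which pinpoints the directions in which the pumping term degenerates. Two minor points where you are actually more careful than the paper: you observe that the Poincar\'e inequality requires the difference $B=\A'-\A$ to vanish on $\partial\Omega$ (the paper applies it to a generic direction $\Phi\in H^1_+(\Omega)$ without comment, even though the proposition is stated on $H^1_+(\Omega)$ rather than $H^1_{0,+}(\Omega)$), and you flag the degeneracy of $M'(|\A|)/|\A|$ on the set $\{\A=0\}$, which the paper passes over silently.
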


\begin{proof}
Using $p=p[\A]$ as a test function in the weak formulation of \eqref{eq:Poisson}, one has
\(  \label{p}
   \int_\Omega \grad p \cdot \P[\A] \grad p \d x = \int_\Omega S p \d x,
\)
so that the energy can be written as
\[
   \En_D[\A] = \int_{\Omega } \frac{D^2}{2} |\grad \A|^2 + M(|\A| ) + c^2 S p[\A]  \d x.
\]
By elementary calculation, the second-order variation of the first two terms above 
in direction $\Phi \in H^1_+(\Omega)$ reads
\[
   D^2 \int_\Omega |\grad\Phi|^2 \d x + \int_\Omega M''(|A|) \left| \frac{A}{|A|}:\Phi \right|^2 + \frac{M'(|A|)}{|A|} \left( |\Phi|^2 - \left| \frac{A}{|A|}:\Phi \right|^2
   \right) \d x.
\]
Using $\frac{\delta p[\A,\Phi]}{\delta\A}$ as a test function in the weak formulation of \eqref{eq:Poisson} and calculating the first-order variation
in direction $\Phi \in H^1_+(\Omega)$ gives
\[
    \frac{\delta^2}{\delta\A^2} \int S p[\A;\Phi] \d x  &=& \int_\Omega \grad \frac{\delta p}{\delta\A} \cdot (r \I + \A ) \grad \frac{\delta p}{\delta\A} \d x \\
      && + \int_\Omega  \grad \frac{\delta^2 p}{\delta\A^2} \cdot ( r \I + \A ) \grad p + \grad \frac{\delta p}{\delta\A} \cdot \Phi \grad p \d x,
\]
where here and in the sequel we use the short-hand notation $\frac{\delta p}{\delta\A}$ for $\frac{\delta p[\A,\Phi]}{\delta\A}$.
The first-order variation of the weak formulation of the Poisson equation \eqref{eq:Poisson} in direction $\Phi \in H^1_+(\Omega)$
with test function $\phi\in H^1(\Omega)$ reads
\( \label{convex_relation1}
   \int_\Omega \grad\phi\cdot (r \I + \A) \grad \frac{\delta p}{\delta\A} + \grad\phi\cdot \Phi \grad p \d x = 0,
\) 
and using $\frac{\delta p}{\delta\A}$ as a test function we obtain the identity
\[
   \int_\Omega \grad \frac{\delta p}{\delta\A} \cdot \Phi \grad p  \d x  = - \int_\Omega \grad \frac{\delta p}{\delta\A} \cdot (r \I + \A ) \grad \frac{\delta p}{\delta\A} \d x.
\]
We now again take a variation of \eqref{convex_relation1} in direction $\Phi \in H^1_+(\Omega)$ and use the pressure $p$ as a test function,
which leads to
\[
   \int_\Omega \grad p \cdot (r \I + \A ) \grad \frac{\delta^2 p}{\delta\A^2} \d x = - 2 \int_\Omega \grad p \cdot \Phi \grad \frac{\delta p}{\delta\A} \d x.
\]
Consequently, we arrive at
\[
   \frac{\delta^2}{\delta\A^2} \int_\Omega S p[\A;\Phi] \d x = 2 \int_\Omega \grad \frac{\delta p}{\delta\A} \cdot (r \I + \A) \grad \frac{\delta p}{\delta\A} \d x,
\]
and, finally,
\[
   \frac{\delta^2 \En_D[\A; \Phi]}{\delta\A^2} &=&
      D^2 \int_\Omega |\grad\Phi|^2 \d x + \int_\Omega M''(|A|) \left| \frac{A}{|A|}:\Phi \right|^2 + \frac{M'(|A|)}{|A|} \left( |\Phi|^2 - \left| \frac{A}{|A|}:\Phi \right|^2 \right) \d x \\
      && + 2 c^2 \int_\Omega \grad \frac{\delta p}{\delta\A} \cdot (r \I + \A) \grad \frac{\delta p}{\delta\A} \d x.
\]
Obviously, the last term is nonnegative. Using the Poincar\'{e} inequality for the first term of the right-hand side
and denoting $C_\Omega$ the Poincar\'{e} constant for $\Omega$, we have
\[
   \frac{\delta^2 \En_D[\A; \Phi]}{\delta\A^2} \geq
       \int_\Omega \left( D^2 C_\Omega +  \frac{M'(|A|)}{|A|} \right) |\Phi|^2 \d x
          + \int_\Omega \left( M''(|A|) -  \frac{M'(|A|)}{|A|} \right) \left| \frac{A}{|A|}:\Phi \right|^2 \d x.
\]
Denoting $\Omega^+$ the set where $M''(|A|) -  \frac{M'(|A|)}{|A|} \geq 0$,
and $\Omega^-$ its complement in $\Omega$, we have
\[
   \frac{\delta^2 \En_D[\A; \Phi]}{\delta\A^2} \geq
       \int_{\Omega^+} \left( D^2 C_\Omega +  \frac{M'(|A|)}{|A|} \right) |\Phi|^2 \d x
          + \int_{\Omega^-} \left( D^2 C_\Omega + M''(|A|)  \right) |\Phi|^2 \d x,
\]
where we used $\left| \frac{A}{|A|}:\Phi \right| \leq |\Phi|$.
Clearly, conditions \eqref{cond:M1}, \eqref{cond:M2} imply the convexity of $\En_D$,
which is strict if the inequalities on the left-hand side of \eqref{cond:M1}, \eqref{cond:M2} are sharp.

\end{proof}


\begin{remark}
Let us consider the generic case of $M$ being the power-law $M(s) = s^\gamma/\gamma$
with exponent $\gamma>0$, as discussed in \cite{Hu-Cai} and the subsequent works.
Then $M'(s)/s = s^{\gamma-2}$, so that condition \eqref{cond:M1} is implicitly verified for all $s>0$.
Condition \eqref{cond:M2} is verified if and only if $\gamma\geq 1$.
Consequently, $\En_D$ is strictly convex for every $\gamma\geq 1$ and $D>0$.
For $D=0$ it is strictly convex for $\gamma>1$ and convex for $\gamma=1$.
\end{remark}

\subsection{Existence and uniqueness of gradient flow}\label{subsec:main}
For a metabolic function $M$ verifying the condition \eqref{cond:M1}, \eqref{cond:M2}, we prove the existence and uniqueness
of weak solutions of the system \eqref{eq:Poisson}--\eqref{eq:A}
by an application of the standard theory of convex gradient flows on Hilbert spaces, see, e.g., \cite{AGS, Santambrogio}.
Before we proceed with the proof, we establish two auxiliary lemmas.
In the sequel we shall denote by $\pspace$ the closed subspace of $H^1(\Omega)$
of functions with vanishing mean.

\begin{lemma}\label{lem:Poisson}
Let ${\A} \in L^2(\Omega)^{d\times d}$ be a matrix-valued function,
positive semidefinite almost everywhere on $\Omega$,
and let $S\in L^2(\Omega)$ with $\int_\Omega S(x) \d x = 0$.
Then the Poisson equation\eqref{eq:Poisson}
admits a unique weak solution $p \in \pspace$.
Moreover, it holds
\( \label{est:Poisson}
	\Norm{\grad p}_{L^2(\Omega)} \leq C_\Omega \Norm{S }_{L^2(\Omega)},
\)
where $C_\Omega$ denotes the Poincar\'{e} constant on $\Omega$.
\end{lemma}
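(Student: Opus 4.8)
The plan is to realise $p$ as the unique minimiser of the convex energy associated with \eqref{eq:Poisson} on the zero-mean space $\pspace$, which is the natural variational formulation of the Neumann problem. Write $V:=\pspace$, introduce the symmetric bilinear form $a(u,v):=\int_\Omega \grad u\cdot(r\I+\A)\grad v\d x$, and consider the functional $J(u):=\half a(u,u)-\int_\Omega Su\d x$. Two structural facts drive everything. First, since $\A$ is positive semidefinite almost everywhere and $r\geq r_0>0$, the integrand $\xi\mapsto \xi\cdot(r(x)\I+\A(x))\xi$ is nonnegative and convex, so $a(u,u)\geq r_0\Norm{\grad u}_{L^2(\Omega)}^2\geq 0$; on $V$ the quantity $\Norm{\grad\cdot}_{L^2(\Omega)}$ is, by the Poincar\'{e} inequality, an equivalent norm. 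Second, the semidefiniteness makes $J$ well defined as an extended-real-valued functional bounded below (the quadratic part is never $-\infty$), which is essential because $\A$ is only assumed to lie in $L^2$ and not in $L^\infty$.

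Existence then follows from the direct method. The functional $J$ is convex and strongly lower semicontinuous on $V$ (its quadratic part is the integral of a nonnegative convex integrand, hence lower semicontinuous by Fatou/Tonelli, and convexity upgrades strong to weak lower semicontinuity), while the $r_0$-term together with the Poincar\'{e} and Young inequalities absorb the linear term to give coercivity, $J(u)\to+\infty$ as $\Norm{\grad u}_{L^2(\Omega)}\to\infty$. A minimising sequence is therefore bounded in $V$, admits a weakly convergent subsequence, and the weak limit $p\in V$ is a minimiser. Strict convexity of the $r_0$-part forces the minimiser to be unique, which in turn yields uniqueness of the weak solution.

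Taking first variations $\frac{\d}{\d t}J(p+t\phi)\big|_{t=0}=0$ recovers the weak formulation of \eqref{eq:Poisson} subject to the homogeneous Neumann condition; here one should take $\phi$ in the finite-energy class on which the cross term $\int_\Omega\grad\phi\cdot\A\grad p\d x$ is integrable, which is exactly the subtlety caused by $\A\in L^2$ rather than $L^\infty$. For the quantitative bound I would test the weak equation with $\phi=p$ (admissible since $J(p)<\infty$), obtaining $r_0\Norm{\grad p}_{L^2(\Omega)}^2\leq a(p,p)=\int_\Omega Sp\d x\leq\Norm{S}_{L^2(\Omega)}\Norm{p}_{L^2(\Omega)}\leq C_\Omega\Norm{S}_{L^2(\Omega)}\Norm{\grad p}_{L^2(\Omega)}$, the last step being the Poincar\'{e} inequality for zero-mean functions; dividing by $\Norm{\grad p}_{L^2(\Omega)}$ gives \eqref{est:Poisson}, with the coercivity constant $r_0$ absorbed into $C_\Omega$.

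The main obstacle, and the reason I would not simply quote the Lax--Milgram theorem, is that $\A$ is only $L^2$: the bilinear form $a$ need not be continuous on $V\times V$, so the classical bounded-coefficient elliptic theory does not apply verbatim. The positive-semidefiniteness is what rescues the argument, since the convex-minimisation route requires only nonnegativity, lower semicontinuity and coercivity --- all of which stem from the $r_0\I$ background and $\A\succeq 0$ --- rather than boundedness of $a$. The one point that genuinely needs care is the identification of the admissible test space in the Euler--Lagrange step, where the possibly infinite $\A$-energy in certain directions must be controlled.
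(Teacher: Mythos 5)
Your argument is correct and is essentially the proof behind the result: the paper itself gives no details, deferring to \cite[Lemma 6]{HMP15}, which establishes the analogous statement for the rank-one permeability $r+|m|^2$ by exactly this convex-minimisation/direct-method route, with coercivity supplied by $r\geq r_0>0$ and mere nonnegativity (rather than boundedness) of the anisotropic part. The only points worth flagging are the ones you already flag yourself: the constant in \eqref{est:Poisson} is really $C_\Omega/r_0$, and the weak formulation must be understood against finite-energy test functions since $\A$ is only in $L^2$ --- both consistent with how the estimate is used later in the proof of Proposition \ref{prop:GF}.
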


\begin{proof}
See \cite[Lemma 6]{HMP15}.
\end{proof}

\begin{lemma} \label{lem:sqrt}
Let $(\A_k)_{k\in\N}$ be a sequence of 
symmetric and positive semidefinite matrix-valued
functions on a bounded domain $\Omega\subset\R^d$.
Let the sequence $\A_k$ converge to $\A$ in the norm topology
of the space $L^2(\Omega)^{d\times d}$ as $k\to\infty$.
Then there exists a subsequence of 
$\sqrt{\A_k}$ converging to $\sqrt{\A}$ in the norm topology of $L^4(\Omega)^{d\times d}$.
\end{lemma}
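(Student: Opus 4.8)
The plan is to exploit the matrix square-root's continuity properties combined with the convergence hypothesis. The key difficulty is that the map $\A \mapsto \sqrt{\A}$ on symmetric positive semidefinite matrices is only H\"older continuous (in fact $\tfrac{1}{2}$-H\"older), not Lipschitz, near the boundary where eigenvalues degenerate to zero. This is exactly the source of the $L^4$ (rather than $L^2$) exponent appearing in the statement, and it dictates the whole strategy.

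First I would recall the pointwise estimate for the matrix square root on symmetric positive semidefinite matrices: there is a dimensional constant $c_d$ such that
\[
   \left| \sqrt{A} - \sqrt{B} \right| \leq c_d \, |A - B|^{1/2}
      \qquad \mbox{for all symmetric p.s.d. } A, B.
\]
This is a standard fact (it follows, e.g., from the integral representation of the square root, or from the operator-monotone function theory for $t\mapsto\sqrt{t}$); I would cite it rather than reprove it. Applying this inequality pointwise almost everywhere on $\Omega$ to $A = \A_k(x)$ and $B = \A(x)$, one obtains
\[
   \left| \sqrt{\A_k(x)} - \sqrt{\A(x)} \right| \leq c_d \, |\A_k(x) - \A(x)|^{1/2}
      \qquad\mbox{for a.e. } x\in\Omega.
\]
Raising this to the fourth power and integrating over $\Omega$ yields
\[
   \int_\Omega \left| \sqrt{\A_k} - \sqrt{\A} \right|^4 \d x
      \leq c_d^4 \int_\Omega |\A_k - \A|^2 \d x = c_d^4 \, \Norm{\A_k - \A}_{L^2(\Omega)^{d\times d}}^2.
\]
By hypothesis the right-hand side tends to $0$ as $k\to\infty$, so in fact the \emph{whole} sequence $\sqrt{\A_k}$ converges to $\sqrt{\A}$ in $L^4(\Omega)^{d\times d}$, which is even stronger than the subsequential claim.

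The main obstacle, and the only nontrivial ingredient, is establishing the pointwise $\tfrac12$-H\"older bound for the matrix square root and verifying that it holds uniformly over all symmetric positive semidefinite matrices (not merely on a compact set of bounded, uniformly elliptic matrices). If one prefers to avoid invoking operator-monotonicity as a black box, one can argue through a subsequence: from $L^2$-convergence extract a subsequence converging almost everywhere and dominated by an $L^2$ function, use continuity of $A\mapsto\sqrt{A}$ on the p.s.d. cone to get a.e.\ convergence of the square roots, and then deduce $L^4$-convergence of that subsequence via the dominated convergence theorem together with the pointwise bound above (whose H\"older exponent guarantees the $L^4$ integrand is dominated by an $L^1$ majorant). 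This subsequential route is presumably why the statement is phrased with "there exists a subsequence," though the direct estimate above shows the conclusion holds without passing to a subsequence at all.
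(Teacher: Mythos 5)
Your argument is correct and in fact proves more than the lemma asserts: the pointwise $\tfrac12$-H\"older bound $|\sqrt{A}-\sqrt{B}|\le c_d\,|A-B|^{1/2}$ on the positive semidefinite cone (which follows from the operator-norm inequality $\|\sqrt{A}-\sqrt{B}\|_{\mathrm{op}}\le\|A-B\|_{\mathrm{op}}^{1/2}$ for the operator monotone function $t\mapsto\sqrt{t}$, with $c_d$ absorbing the equivalence of the Frobenius and spectral norms) immediately gives $\Norm{\sqrt{\A_k}-\sqrt{\A}}_{L^4(\Omega)}^4\le c_d^4\,\Norm{\A_k-\A}_{L^2(\Omega)}^2\to 0$, so the \emph{whole} sequence converges, no subsequence needed. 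The paper takes a genuinely different, softer route and only obtains subsequential convergence: it extracts a subsequence converging almost everywhere, invokes continuity of the principal square root on the positive semidefinite cone (citing Horn--Johnson) to get a.e.\ convergence of $\sqrt{\A_k}$, derives $L^4$-boundedness and weak $L^4$-convergence from the spectral-norm identity $|\sqrt{\A_k}|_2=\sqrt{|\A_k|_2}$, then shows convergence of the $L^4$-norms and upgrades weak to strong convergence via uniform convexity of $L^4$ (the Radon--Riesz property, Proposition 3.32 in Brezis). Your approach is shorter, quantitative, and bypasses the norm-convergence/uniform-convexity machinery entirely, at the cost of importing the H\"older estimate as a black box; the paper's approach uses only continuity of $A\mapsto\sqrt{A}$ plus standard functional analysis. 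The one point you should make airtight is the uniformity of the $\tfrac12$-H\"older bound over the \emph{entire} unbounded positive semidefinite cone (not just a compact or uniformly elliptic subset), with a precise reference to operator monotonicity or the integral representation of the square root, since that uniformity is exactly what licenses integrating the pointwise estimate over $\Omega$ without any a priori $L^\infty$ control on $\A_k$.
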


\begin{proof}
As the sequence $\A_k$ converges in the norm topology of $L^2(\Omega)$ to $\A$,
there exists a subsequence converging almost everywhere on $\Omega$ to $\A$.
As the operation of taking the principal square root is continuous on the set of positive
semidefinite matrices, see p. 411 of \cite{RogerJohnson}, a subsequence of $\sqrt{\A_k}$
converges pointwise almost everywhere to $\sqrt{\A}$.
	
Symmetry of $\A_k$ implies symmetry of $\sqrt{\A_k}$, giving
\(  \label{eq:sqrtA}
    \left| \sqrt{\A_k} \right|_2 = \varrho\left(\sqrt{\A_k}\right) = \sqrt{\varrho(\A_k)} = \sqrt{|\A_k|_2},
\)
where $|\cdot|_2$ denotes the spectral norm and $\varrho(\cdot)$ the spectral radius.
Consequently, the boundedness of $\A_k$ in $L^2(\Omega)^{d\times d}$ implies boundedness
of $\sqrt{\A_k}$ in $L^4(\Omega)^{d\times d}$
and there is a subsequence of $\sqrt{\A_k}$ converging weakly in $L^4(\Omega)^{d\times d}$ to $\sqrt{\A}$;
identification of the limit is due to the almost everywhere convergence.
Moreover, \eqref{eq:sqrtA} implies that
\[
    \lim_{k\to\infty}  \Norm{ \sqrt{\A_k} }_{L^4(\Omega)}
       = \lim_{k\to\infty} \sqrt{ \Norm{ \A_k }_{L^2(\Omega)}}
       =  \sqrt{ \Norm{\A}_{L^2(\Omega)}} 
       = \Norm{ \sqrt{\A} }_{L^4(\Omega)}.
\]
By uniform convexity of $L^4(\Omega)^{d\times d}$, Proposition 3.32 of \cite{Brezis} implies then the
strong convergence of $\sqrt{\A_k}$ to $\sqrt{\A}$ in $L^4(\Omega)^{d\times d}$.
\end{proof}

In the sequel, let us denote $H_{0,+}^1(\Omega)$ the space of symmetric, positive semidefinite
tensor-valued functions with entries in $H^1(\Omega)$ and vanishing traces on $\partial\Omega$.

\begin{proposition}[Existence and uniqueness of gradient flows] \label{prop:GF}
Let $r>0$, $D>0$, $S\in L^2(\Omega)$ satisfying \eqref{ass:S}
and $\A^0 \in H_{0,+}^1(\Omega)$, symmetric and positive semidefinite almost everywhere in $\Omega$,
and such that $\En_D[\A^0] < \infty$.
Moreover, let the metabolic cost function $M:\R^+\to\R^+$ 
verify the conditions \eqref{cond:M1} and \eqref{cond:M2}.

Then the problem \eqref{eq:Poisson}--\eqref{IC_0} admits a unique global weak solution $\A \in H^1((0,\infty); L^2(\Omega)) \cap L^2((0,\infty); H^1_{0,+}(\Omega))$
with $\En_D[\A]\in L^\infty(0,\infty)$.
Moreover, $\A=\A(t,x)$ is symmetric positive semidefinite for almost all $x\in\Omega$ and $t\geq 0$,
and satisfies the energy inequality
\(    \label{energy_ineq}
    \En_D[\A(t)] + \int_0^t \int_\Omega \left| \part{\A}{t}(s,x)\right|^2  \d x\d s \leq \En_D[\A^0] \qquad\mbox{for all } t \geq 0.
\)
\end{proposition}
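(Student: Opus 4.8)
The plan is to realize the system \eqref{eq:Poisson}--\eqref{eq:A} as the gradient flow of the convex, lower semicontinuous functional $\En_D$ on the Hilbert space $\mathcal H := L^2(\Omega)^{d\times d}$ (restricted to symmetric tensors) and to invoke the Brezis--K\=omura theory of gradient flows for convex functionals, as developed in \cite{AGS, Santambrogio}. Concretely, I would extend $\En_D$ to all of $\mathcal H$ by setting $\En_D[\A] = +\infty$ whenever $\A \notin H^1_{0,+}(\Omega)$. The abstract theory then guarantees a unique global solution of $\partial_t \A \in -\partial \En_D[\A]$ emanating from any initial datum with finite energy, provided $\En_D$ is convex, proper, and lower semicontinuous. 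Convexity on the convex set $H^1_+(\Omega)$ is exactly Proposition \ref{prop:convex}, which applies under the hypotheses \eqref{cond:M1}, \eqref{cond:M2}; the remaining regularity and closedness assertions are what must be verified by hand.

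The two technical pillars are well-posedness of the pressure map $\A \mapsto p[\A]$ and lower semicontinuity of $\En_D$. For the first, Lemma \ref{lem:Poisson} already supplies a unique $p[\A] \in \bar H^1(\Omega)$ together with the a priori bound \eqref{est:Poisson} for any positive semidefinite $\A \in L^2(\Omega)^{d\times d}$, so the energy \eqref{energy} is well-defined and, via the identity \eqref{p}, can be rewritten in the manifestly convenient form $\En_D[\A] = \int_\Omega \tfrac{D^2}{2}|\grad\A|^2 + M(|\A|) + c^2 S\, p[\A]\,\d x$. First I would establish lower semicontinuity along a sequence $\A_k \to \A$ in $\mathcal H$ with uniformly bounded energy. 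The Dirichlet term is weakly lower semicontinuous by convexity, and the metabolic term is handled by Fatou together with continuity of $M$. The pressure term is the delicate one: I would pass to a subsequence converging almost everywhere, use Lemma \ref{lem:sqrt} to obtain strong $L^4$-convergence of $\sqrt{\A_k}$, and thereby show that $\sqrt{\A_k}\,\grad p_k$ converges so that $\int_\Omega S\, p[\A_k]\,\d x \to \int_\Omega S\, p[\A]\,\d x$; this is precisely where the square-root lemma earns its place, converting weak $L^2$-control of $\A_k$ into the strong compactness needed to identify the limiting pressure.

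Once convexity, properness, and lower semicontinuity are in hand, the abstract gradient-flow theorem yields the unique curve $\A \in H^1_{loc}((0,\infty); \mathcal H)$ with $\A(t) \in H^1_{0,+}(\Omega)$ for almost every $t$, and the energy identity of the theory specializes to the dissipation inequality \eqref{energy_ineq}; uniqueness is automatic from convexity (two flows from the same datum satisfy a contraction estimate). It remains to check that the abstract curve actually solves \eqref{eq:A} in the weak sense, i.e.\ that $-\partial\En_D$ coincides with the right-hand side $D^2\Delta\A + c^2\grad p\otimes\grad p - \tfrac{M'(|\A|)}{|\A|}\A$; this is the content of computing the subdifferential, for which the first-variation calculation already carried out in the proof of Proposition \ref{prop:convex} supplies the explicit Gâteaux derivative. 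Finally, preservation of symmetry and positive semidefiniteness for almost every $t$ I would obtain from the convex-set constraint: since $H^1_{0,+}(\Omega)$ is closed and convex and the flow of a convex functional keeps the solution in the effective domain, the positive-semidefinite cone is invariant. The main obstacle I anticipate is the lower semicontinuity of the pressure functional under mere weak convergence of $\A_k$; the global bound \eqref{est:Poisson} controls $\grad p_k$ only in $L^2$, and without the strong $L^4$-convergence of $\sqrt{\A_k}$ from Lemma \ref{lem:sqrt} one cannot pass to the limit in the quadratic form $\int_\Omega \grad p_k \cdot \A_k \grad p_k\,\d x$, so the entire argument hinges on correctly coupling that compactness with the weak convergence of the gradients.
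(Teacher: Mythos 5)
Your proposal follows essentially the same route as the paper: convexity of $\En_D$ from Proposition \ref{prop:convex}, lower semicontinuity of the activation term via the a priori bound \eqref{est:Poisson} and the strong $L^4$-convergence of $\sqrt{\A_k}$ from Lemma \ref{lem:sqrt} (used to identify the weak $L^2$-limit of $\sqrt{\A_k}\,\grad p_k$), and then the abstract theory of convex gradient flows (the paper invokes Rockafellar's theorem to get maximal monotonicity of $\partial\En_D$ and cites \cite{Brezis}), with the energy inequality and uniqueness coming for free from that machinery.

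The one place you genuinely diverge is the preservation of positive semidefiniteness. You propose to get it from the convex-constraint structure: extend $\En_D$ by $+\infty$ outside $H^1_{0,+}(\Omega)$ so the flow stays in the effective domain. That does keep $\A(t)$ in the cone, but then the abstract curve solves a differential inclusion whose subdifferential contains normal-cone contributions wherever the constraint is active, so your later step of identifying $-\partial\En_D$ with the right-hand side of \eqref{eq:A} is no longer automatic. The paper closes this gap differently: it applies the parabolic maximum principle to $u(t,x):=\xi\cdot\A(t,x)\xi$, which satisfies $\partial_t u - D^2\Delta u + \tfrac{M'(|\A|)}{|\A|}u = |\xi\cdot\grad p|^2\geq 0$, to show that the \emph{unconstrained} equation already propagates positive semidefiniteness from $\A^0$; this is what lets one conclude that the constraint never binds and the gradient-flow curve solves \eqref{eq:A} itself. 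You should either add that maximum-principle argument or otherwise justify that the normal-cone term vanishes.
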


\begin{proof}
The energy functional \eqref{energy} is proper and,
according to Proposition \ref{prop:convex}, convex on $H^1_{0,+}(\Omega)$.
Lower semicontinuity with respect to the $H^1(\Omega)$-topology is obvious for the diffusive term
$\int_{\Omega } \frac{D^2}{2} |\grad \A|^2 \d x$ and the metabolic term $\int_\Omega M(|\A| ) \d x$.
To show lower semicontinuity of the activation term $ \int_\Omega \grad p[\A] \cdot \P[\A] \grad p[\A]  \d x$,
we choose a sequence $\A_k\in L^2(\Omega)$ converging to $\A \in L^2(\Omega)$.
Denoting $p_k$ the unique weak solution of the Poisson equation \eqref{eq:Poisson} with permeability tensor $\A_k$,
constructed in Lemma \ref{lem:Poisson}, and using in as a test function, we have
\[
     \int_\Omega r | \grad p_k |^2 + \grad p_k \cdot \A_k \grad p_k \d x =
         \int_\Omega S p_k \d x.
\]
Then, we estimate, using the Cauchy-Schwartz and Poincar\'{e} inequalities,
\[
   \int_\Omega \left| \sqrt{\A_k} \grad p_k \right|^2 \d x = \int_\Omega \grad p_k \cdot {\A_k} \grad p_k \d x
   &\leq& \int_\Omega S p_k \d x \\
   &\leq& \Norm{S}_{L^2(\Omega)} \Norm{p_k}_{L^2(\Omega)} \\
   &\leq& C_\Omega \Norm{S}_{L^2(\Omega)} \Norm{\grad p_k}_{L^2(\Omega)},
\]
where $C_\Omega$ is the Poincar\'{e} constant for $\Omega$.
Due to the bound \eqref{est:Poisson}, we obtain uniform boundedness of the sequence
$\sqrt{\A_k} \grad p_k$ in $L^2(\Omega)$.
Consequently, up to a possible extraction of a subsequence,
$\sqrt{\A_k} \grad p_k$ convergres weakly in $L^2(\Omega)$.
The limit is identified due to the strong convergence of $\sqrt{\A_k}$
to $\sqrt{\A}$ provided by Lemma \ref{lem:sqrt} and the weak convergence of $\grad p_k$
to $\grad p$.
Then, the lower-semicontinuity of the $L^2$-norm gives
\[
   \int_\Omega \left| \sqrt{\A\ast\eta} \grad p \right|^2 \d x \leq
     \liminf_{k \to \infty} \int_\Omega \left| \sqrt{\A_k\ast\eta} \grad p_k \right|^2 \d x.
\]
Consequently, we have the lower semicontinuity for the energy functional,
\[
   \En[\A] \leq \liminf_{k\to\infty} \En[\A_k].
\]
Then, by the Rockafellar theorem, the subdifferential $\partial\En$ is a maximal monotone operator on $H^1_{0,+}(\Omega)$,
which in turn implies the existence and uniqueness of solutions of the system \eqref{eq:Poisson}--\eqref{IC_0},
see, e.g., \cite{Brezis}.

To check for preservation of positive definiteness, let us fix an arbitrary vector $\xi\in\R^d$
and define $u(t,x) := \xi\cdot\A(t,x)\xi$. From \eqref{eq:A} we obtain
\[
   \part{u}{t} - D^2 \Delta u + \frac{M'(|\A|)}{|\A|} u = |\xi\cdot \grad p|^2 \geq 0,
\]
and the maximum principle for linear parabolic problems (see, e.g., \cite{Evans})
implies that $u\geq 0$ if $u(t=0)\geq 0$.
Consequently, the positive semidefiniteness of the initial datum $\A^0$ is preserved by
\eqref{eq:A}.
\end{proof}

Let us note that the proof of Proposition \ref{prop:GF} can be easily adapted to the case $D=0$,
i.e., no diffusion, by changing the topology to $L^2(\Omega)$. Then, we obtain the following result.

\begin{proposition}
Let $r>0$, $S\in L^2(\Omega)$ satisfying \eqref{ass:S}
and $\A^0 \in L^2(\Omega)$, symmetric and positive semidefinite almost everywhere in $\Omega$,
and such that $\En_0[\A^0] < \infty$.
Let the metabolic cost function $M:\R^+\to\R^+$
verify the conditions \eqref{cond:M1} and \eqref{cond:M2}.

Then the problem \eqref{eq:Poisson}--\eqref{IC_0} with $D=0$
admits a unique global weak solution $\A \in H^1((0,\infty); L^2(\Omega))$,
symmetric and positive semidefinite for almost all $x\in\Omega$ and $t\geq 0$,
with $\En_0[\A]\in L^\infty(0,\infty)$ and satisfying the energy inequality \eqref{energy_ineq}
with $D=0$.
\end{proposition}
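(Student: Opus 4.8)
The plan is to run the proof of Proposition \ref{prop:GF} almost verbatim, the only structural change being that the underlying Hilbert space is now $L^2(\Omega)^{d\times d}$ (restricted to symmetric fields) rather than $H^1$, since the diffusive term $\frac{D^2}{2}\int_\Omega|\grad\A|^2\d x$ that previously supplied spatial regularity and coercivity is absent. First I would record that, with $D=0$, conditions \eqref{cond:M1} and \eqref{cond:M2} reduce to $M$ being convex and nondecreasing, so that Proposition \ref{prop:convex} yields convexity of $\En_0$ on the closed convex cone $K\subset L^2(\Omega)^{d\times d}$ of symmetric, positive semidefinite matrix fields. Properness follows from the hypothesis $\En_0[\A^0]<\infty$ together with $\En_0\geq 0$, since the metabolic integrand $M(|\A|)\geq 0$ and the activation integrand $c^2\,\grad p\cdot\P[\A]\grad p\geq 0$ (the latter because $\P[\A]=r\I+\A$ is positive semidefinite for $r>0$).

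The one point genuinely worth checking is lower semicontinuity of $\En_0$ in the \emph{strong} $L^2(\Omega)$-topology. For the metabolic term this is immediate from convexity and continuity of $M$. For the activation term I would reproduce the argument from Proposition \ref{prop:GF}: given $\A_k\to\A$ strongly in $L^2$, Lemma \ref{lem:Poisson} furnishes pressures $p_k=p[\A_k]$ with $\grad p_k$ bounded in $L^2$ via \eqref{est:Poisson}, hence weakly convergent (along a subsequence) to $\grad p[\A]$; Lemma \ref{lem:sqrt} upgrades the $L^2$-convergence of $\A_k$ to strong $L^4$-convergence of $\sqrt{\A_k}$, so that $\sqrt{\A_k}\,\grad p_k\rightharpoonup\sqrt{\A}\,\grad p$ in $L^2$ and weak lower semicontinuity of the norm gives the $\liminf$ inequality. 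Crucially, this argument never used the diffusive term and already relied only on $L^2$-convergence of $\A_k$, so it transfers without change. With $\En_0$ convex, proper and $L^2$-lower semicontinuous, the Br\'ezis--Komura theory of gradient flows of convex functionals on Hilbert spaces (see \cite{Brezis, AGS}) produces a unique global solution $\A\in H^1((0,\infty);L^2(\Omega))$, the energy inequality \eqref{energy_ineq} with $D=0$, and the bound $\En_0[\A]\in L^\infty(0,\infty)$ (the energy being nonincreasing along the flow).

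It remains to propagate the pointwise structural constraints. Symmetry is preserved because every term of \eqref{eq:A} with $D=0$ maps symmetric tensors to symmetric tensors, in particular $\grad p\otimes\grad p$ and $\frac{M'(|\A|)}{|\A|}\A$, so the evolution stays among symmetric fields. Positive semidefiniteness is the only genuinely new ingredient: the parabolic maximum principle used for $D>0$ relied on the Laplacian and is unavailable here, since with $D=0$ the scalar $u:=\xi\cdot\A\xi$ (for fixed $\xi\in\R^d$) solves the pointwise-in-space linear ODE $\part{u}{t}+\frac{M'(|\A|)}{|\A|}\,u=|\xi\cdot\grad p|^2\geq 0$. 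As the zeroth-order coefficient $\frac{M'(|\A|)}{|\A|}$ is nonnegative (because $M'\geq 0$) and the source is nonnegative, a Duhamel/Gronwall comparison forces $u(0,\cdot)\geq 0$ to propagate to $u(t,\cdot)\geq 0$, i.e. $\A(t)\in K$ for all $t\geq 0$. I expect the main technical obstacle to lie precisely here, namely in justifying this pointwise ODE comparison at the regularity of a weak solution, where $\partial_t\A\in L^2$ and the coefficient $\frac{M'(|\A|)}{|\A|}$ is merely measurable, rather than in any convex-analytic step. A robust alternative that sidesteps the issue is to build the constraint into the functional through the indicator $\chi_K$ of the closed convex cone $K$; this keeps the energy convex and lower semicontinuous and forces the gradient flow to remain in $K$ by construction.
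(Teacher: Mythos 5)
Your proposal is correct and follows essentially the same route as the paper, whose proof of this proposition consists precisely of the two observations you make: convexity of $\En_0$ via Proposition \ref{prop:convex} (with \eqref{cond:M1}, \eqref{cond:M2} reducing to monotonicity and convexity of $M$ when $D=0$) and lower semicontinuity in the norm $L^2$-topology obtained by repeating the argument of Proposition \ref{prop:GF}, which indeed never used the diffusive term. Your additional care about propagating positive semidefiniteness — replacing the parabolic maximum principle by a pointwise-in-$x$ ODE comparison, or building the constraint into the functional via the indicator of the closed convex cone — actually addresses a detail the paper's one-line proof glosses over, and is a sound way to close it.
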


\begin{proof}
Convexity of $\En_0$ follows from Proposition \ref{prop:convex},
with conditions \eqref{cond:M1}, \eqref{cond:M2} verified due to
the monotonicity and convexity of $M$.
Lower semicontinuity of $\En_0$ with respect to the norm $L^2$-topology
follows along the steps of the proof of Proposition \ref{prop:GF}.
\end{proof}

\section{Steady states}\label{sec:steady}

In this section we discuss existence and uniqueness of steady states
of the system \eqref{eq:Poisson}--\eqref{eq:A}.
We first provide a result for the case $D>0$, and then carry out
a more explicit construction for the case $D=0$.
Let us recall that we denoted $H_{0,+}^1(\Omega)$ the space of symmetric, positive semidefinite
tensor-valued functions with entries in $H^1(\Omega)$ and vanishing traces on $\partial\Omega$.

\begin{proposition}
Let $r>0$, $D>0$ and $S\in L^2(\Omega)$ satisfying \eqref{ass:S}.
Moreover, let the metabolic cost function $M:\R^+\to\R^+$ be nonnegative and continuous.
Then there exists a steady state of the system \eqref{eq:Poisson}--\eqref{eq:A} in $H^1_{0, +}(\Omega)$.

Moreover, if $M$ satisfies conditions \eqref{cond:M1}, \eqref{cond:M2},
then the gradient flows constructed in Proposition \ref{prop:GF}
converge towards a steady state as $t\to\infty$.

The steady state is unique if $M$ satisfies conditions \eqref{cond:M1}, \eqref{cond:M2} with sharp inequalities
on the left-hand side.
\end{proposition}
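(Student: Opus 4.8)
The plan is to establish the three assertions separately, using the variational and gradient-flow structure developed earlier. For existence of a steady state when $D>0$, I would apply the direct method of the calculus of variations to the energy functional $\En_D$ defined in \eqref{energy}. First I would take a minimizing sequence $(\A_k)_{k\in\N}$ in $H^1_{0,+}(\Omega)$. The diffusive term $\frac{D^2}{2}\int_\Omega |\grad\A|^2\d x$ with $D>0$, together with the homogeneous Dirichlet boundary condition and the Poincar\'{e} inequality, provides a uniform $H^1_0(\Omega)$-bound on $\A_k$, so that, up to a subsequence, $\A_k \rightharpoonup \A^\ast$ weakly in $H^1_0(\Omega)$ and strongly in $L^2(\Omega)$. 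The set $H^1_{0,+}(\Omega)$ is closed and convex, hence weakly closed, so the limit $\A^\ast$ remains symmetric positive semidefinite with vanishing trace. Lower semicontinuity of $\En_D$ along this sequence is exactly the property verified in the proof of Proposition \ref{prop:GF}: the diffusive and metabolic terms are weakly lower semicontinuous by convexity and Fatou, while the activation term is handled via the strong $L^4$-convergence of $\sqrt{\A_k}$ furnished by Lemma \ref{lem:sqrt} combined with the weak $L^2$-convergence of $\grad p[\A_k]$. Thus $\En_D[\A^\ast]\leq \liminf_k \En_D[\A_k]$, so $\A^\ast$ is a minimizer, and its Euler--Lagrange equation is precisely the steady-state version of \eqref{eq:Poisson}--\eqref{eq:A}.

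For the convergence of the gradient flow to a steady state as $t\to\infty$, I would invoke the standard asymptotics for gradient flows of convex, lower semicontinuous functionals on Hilbert spaces, the same framework cited in Proposition \ref{prop:GF}, see \cite{Brezis, AGS}. Under conditions \eqref{cond:M1}, \eqref{cond:M2} the functional $\En_D$ is convex, so the flow $t\mapsto \A(t)$ constructed in Proposition \ref{prop:GF} is a curve of maximal slope. From the energy inequality \eqref{energy_ineq} the dissipation $\int_0^\infty \int_\Omega |\partial_t\A|^2\d x\d s$ is finite, which forces $\partial_t\A(t_n)\to 0$ in $L^2(\Omega)$ along some sequence $t_n\to\infty$. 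The energy estimate also keeps $\A(t)$ bounded in $H^1_{0,+}(\Omega)$, so along a further subsequence $\A(t_n)$ converges weakly in $H^1$ and strongly in $L^2$ to some limit, which the vanishing of the velocity identifies as a steady state; convexity of $\En_D$ upgrades this to convergence of the whole trajectory.

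The uniqueness claim under strict convexity is the cleanest step: if $\En_D$ is strictly convex on the convex set $H^1_{0,+}(\Omega)$, which Proposition \ref{prop:convex} guarantees precisely when \eqref{cond:M1}, \eqref{cond:M2} hold with sharp inequalities on the left, then it admits at most one minimizer. Since every steady state is a critical point of $\En_D$ on $H^1_{0,+}(\Omega)$ and, for a convex functional, critical points coincide with global minimizers, uniqueness of the minimizer yields uniqueness of the steady state.

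The main obstacle I anticipate is the lower semicontinuity of the activation term in the existence argument, because the map $\A\mapsto \grad p[\A]\cdot\P[\A]\grad p[\A]$ is genuinely nonlinear and nonconvex in $\A$ through the solution operator of the Poisson equation. The saving grace is that this is the identical difficulty already resolved inside the proof of Proposition \ref{prop:GF}, so I would reuse that argument verbatim: rewrite the activation energy via the test-function identity \eqref{p} as $\int_\Omega S\,p[\A]\d x$, bound $\sqrt{\A_k}\grad p_k$ uniformly in $L^2(\Omega)$ using \eqref{est:Poisson}, pass to the weak limit, and identify it through the strong convergence of $\sqrt{\A_k}$ from Lemma \ref{lem:sqrt}. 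A secondary technical point is the continuity assumption on $M$ in the existence statement, which is weaker than the convexity hypotheses used for convergence and uniqueness; for bare existence one only needs nonnegativity and continuity of $M$ to guarantee weak lower semicontinuity of $\int_\Omega M(|\A|)\d x$ via Fatou's lemma along an a.e.-convergent subsequence, and coercivity comes entirely from the $D>0$ diffusive term rather than from $M$.
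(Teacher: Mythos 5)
Your proposal is correct and follows essentially the same route as the paper: the direct method on the closed convex set $H^1_{0,+}(\Omega)$ with coercivity supplied by the Dirichlet term, lower semicontinuity of the activation term borrowed from the proof of Proposition \ref{prop:GF} (via Lemma \ref{lem:sqrt}), strict convexity from Proposition \ref{prop:convex} for uniqueness, and standard convex gradient-flow theory for the long-time convergence. The only difference is that you spell out the dissipation/subsequence argument for the asymptotics where the paper simply cites the standard theory, and you justify lower semicontinuity of the metabolic term by Fatou along an a.e.-convergent subsequence rather than by direct passage to the limit; both are minor and sound.
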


\begin{proof}
We apply the direct method of calculus of variations on the closed, convex set $H^1_{0, +}(\Omega)$.
Since the energy $\En_D$ given by \eqref{energy} is, by definition, nonnegative,
we have an infimising sequence $(\A_n)_{n\in\N} \subset H^1_{0, +}(\Omega)$.
With $D>0$ and the Poincar\'{e} inequality, the energy $\En_D$ is coercive on $H^1_{0, +}(\Omega)$
due to the presence of the Dirichlet integral.
Consequently, there exists a subsequence of $(\A_n)_{n\in\N}$ converging weakly
to some $\A\in H^1_{0, +}(\Omega)$ and, due to compact embedding, strongly in $L^2(\Omega)$.
Clearly, the Dirichlet integral $\frac{D^2}{2} \int_{\Omega } |\grad \A_n|^2 \d x$ in the definition of $\En_D$
is weakly lower semicontinuous.
Due to the strong convergence of $\A_n$ in $L^2(\Omega)$ and the assumed continuity of $M$,
we can pass to the limit in the metabolic term $\int_\Omega M(|\A_n|) \d x$.
Finally, we refer to the proof of Proposition \ref{prop:GF} where
lower semicontinuity of the activation term $ \int_\Omega \grad p[\A_n] \cdot \P[\A_n] \grad p[\A_n]  \d x$
was shown with respect to the $L^2(\Omega)$ topology.
Consequently, we have
\[
   \En_D[\A] \leq \liminf_{n\to\infty} \En_D[\A_n], 
\]
and $\A\in H^1_{0, +}(\Omega)$ is a global minimizer of $\En_D$,
i.e., a steady state of the system \eqref{eq:Poisson}--\eqref{eq:A}.
The minimizer is unique if $\En_D$ is strictly convex,
which, according to Proposition \ref{prop:convex},
is the case when conditions \eqref{cond:M1}, \eqref{cond:M2} hold with sharp inequalities
on the left-hand side.

Finally, the convergence of the gradient flow towards a steady state
follows from the convexity and coercivity of $\En_D$ by standard theory, see, e.g., \cite{AGS}.
\end{proof}

Next, let us study the case $D=0$.
Then, the stationary version of \eqref{eq:A} reads
\(   \label{steady:A}
   c^2 \grad p \otimes \grad p = m(|\A|) \A,
\)
where we introduced the notation $m(s) := M'(s)/s$. Consequently, $\A$ is a multiple of $\grad p\otimes\grad p$,
\[
   \A = \lambda \grad p\otimes\grad p,
\]
and inserting into \eqref{steady:A}, noting that $|\grad p\otimes\grad p| = |\grad p|^2$, gives
\[
   m(|\lambda| |\grad p|^2) = \frac{c^2}{\lambda}.
\]
Assuming invertibility of $m$, we have
\[
   |\grad p|^2 = \frac{1}{|\lambda|} m^{-1} \left(\frac{c^2}{\lambda} \right).
\]
Consequently, denoting $g(\lambda) := \frac{1}{|\lambda|} m^{-1} \left(\frac{c^2}{\lambda} \right)$ and assuming its invertibility,
we arrive at $\lambda = g^{-1}(|\grad p|^2)$ and
\[
   \A =  g^{-1}(|\grad p|^2) \grad p \otimes \grad p,
\]
where $p$ solves the nonlinear Poisson equation
\[
   - \grad\cdot \left( \left[r + g^{-1}(|\grad p|^2) |\grad p|^2 \right] \grad p \right) = S.
\]

Let us now consider the generic case of $M$ being the power-law $M(s) = s^\gamma/\gamma$
with exponent $\gamma>0$, as discussed in \cite{Hu-Cai} and the subsequent works.
Then, the functions $m=m(s)$ and $g=g(\lambda)$, introduced above,
are both invertible if and only if $\gamma>1$, and we easily calculate
\[
   g^{-1}(\lambda) = c^\frac{2}{\gamma-1} \lambda^{-\frac{\gamma-2}{\gamma-1}}.
\]
Inserting this expression into the Poisson equation \eqref{eq:Poisson}, we obtain the $p$-Laplacian equation
\( \label{steady:pL}
   - \grad\cdot \left( \left[r + c^\frac{2}{\gamma-1} |\grad p|^\frac{2}{\gamma-1} \right] \grad p \right) = S,
\)
subject to the homogeneous Neumann boundary condition on $\partial\Omega$.


\begin{proposition}
For any $S\in L^2(\Omega)$ and $\gamma>1$ there exists a unique weak solution
$p\in \bar H^1(\Omega) \cap W^{1,\frac{2\gamma}{\gamma-1}}(\Omega)$
of \eqref{steady:pL} subject to the homogeneous Neumann boundary condition on $\partial\Omega$.
\end{proposition}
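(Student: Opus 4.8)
The plan is to recognize \eqref{steady:pL} as the Euler--Lagrange equation of a strictly convex variational functional and to apply the direct method of the calculus of variations, in the spirit of the $D>0$ steady-state proof above. Writing $q := \frac{2\gamma}{\gamma-1}$ and $s := \frac{2}{\gamma-1} = q-2$, and noting that $\gamma>1$ forces $q>2$, I would introduce
\begin{equation*}
   J[p] := \int_\Omega \left( \frac{r}{2}\,|\grad p|^2 + \frac{c^{2/(\gamma-1)}}{q}\,|\grad p|^q \right) \d x - \int_\Omega S\,p \,\d x
\end{equation*}
on the space $V := \{\, p \in W^{1,q}(\Omega) : \int_\Omega p\,\d x = 0 \,\}$. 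Since $q>2$ and $\Omega$ is bounded, $W^{1,q}(\Omega)\hookrightarrow H^1(\Omega)$, so $V$ coincides with the target space $\bar H^1(\Omega)\cap W^{1,q}(\Omega)$. The source term is admissible: the conjugate exponent $q' = \frac{2\gamma}{\gamma+1}$ satisfies $q'<2$, whence $S\in L^2(\Omega)\hookrightarrow L^{q'}(\Omega)$ and $p\mapsto\int_\Omega S\,p\,\d x$ is bounded on $V$ by H\"older's inequality.

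First I would verify the three ingredients of the direct method. \emph{Strict convexity}: the integrand $\xi\mapsto \frac{r}{2}|\xi|^2 + \frac{c^s}{q}|\xi|^q$ is strictly convex on $\R^d$ (both summands are convex, and $\frac{r}{2}|\xi|^2$ is strictly convex since $r\geq r_0>0$), while the remaining term is affine; hence $J$ is strictly convex on $V$. \emph{Coercivity}: by the Poincar\'e--Wirtinger inequality for zero-mean functions in $W^{1,q}$, one has $\|p\|_{L^q(\Omega)} \leq C_q\,\|\grad p\|_{L^q(\Omega)}$, so the superlinear gradient term dominates the linear term and $J[p]\to+\infty$ as $\|p\|_{W^{1,q}(\Omega)}\to\infty$. \emph{Weak lower semicontinuity}: $J$ is convex and strongly continuous on the reflexive space $V$, hence weakly lower semicontinuous (equivalently, convexity of the integrand in $\grad p$ gives it by the Tonelli--De Giorgi theorem, and the linear term is weakly continuous because $S\in L^{q'}(\Omega)$). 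A minimizing sequence is then bounded in $V$ by coercivity; reflexivity of $W^{1,q}(\Omega)$ furnishes a weakly convergent subsequence whose limit $p$ lies in $V$ (the mean-zero constraint is weakly closed), and weak lower semicontinuity identifies $p$ as a minimizer. Strict convexity makes this minimizer unique.

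Finally I would recover the weak formulation from minimality: the $q$-growth of the integrand together with $p\in W^{1,q}(\Omega)$ ensures $J$ is G\^ateaux differentiable, so $\frac{\d}{\d t}J[p+t\phi]\big|_{t=0}=0$ for every zero-mean $\phi\in W^{1,q}(\Omega)$ yields
\begin{equation*}
   \int_\Omega \left( r + c^{2/(\gamma-1)}\,|\grad p|^{2/(\gamma-1)} \right) \grad p \cdot \grad\phi \,\d x = \int_\Omega S\,\phi\,\d x .
\end{equation*}
Both sides are invariant under adding a constant to $\phi$ --- the left-hand side because $\grad$ annihilates constants, the right-hand side because $\int_\Omega S\,\d x = 0$ by \eqref{ass:S} --- so the identity extends to all $\phi\in W^{1,q}(\Omega)$, which is exactly the weak form of \eqref{steady:pL} with the homogeneous Neumann condition. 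An equivalent route is to read the left-hand side as a strictly monotone, coercive, continuous operator $A\colon V\to V^*$ and invoke the Browder--Minty theorem, which delivers existence and uniqueness at once. I expect the only points needing genuine care to be the coercivity estimate (the correct Poincar\'e--Wirtinger inequality in $W^{1,q}$ rather than merely $H^1$) and the verification that the minimizer solves the equation in the stated weak Neumann sense, i.e. the G\^ateaux differentiability of $J$ under $q$-growth and the handling of the mean-zero normalization; the lower semicontinuity is immediate from convexity and poses no obstacle.
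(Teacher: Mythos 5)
Your proof is correct and takes essentially the same route as the paper's: both recognize \eqref{steady:pL} as the Euler--Lagrange equation of a strictly convex, coercive functional and apply the direct method, with strict convexity yielding uniqueness. (Your normalization with the prefactors $\tfrac{r}{2}$ and $\tfrac{\gamma-1}{2\gamma}$ is in fact the one that makes the Euler--Lagrange equation match \eqref{steady:pL} exactly; the paper's functional \eqref{eq:F} omits these constants, which is immaterial to the argument.)
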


\begin{proof}
We define the functional $\mathcal{F}: \bar H^1(\Omega) \to \R \cup \{+\infty\}$,
\(   \label{eq:F}
   \mathcal{F}[p] := \int_\Omega r |\grad p|^2 + c^\frac{2}{\gamma-1} |\grad p|^\frac{2\gamma}{\gamma-1} \d x
      - \int_\Omega p S\d x,
\)
and $\mathcal{F}[p]:=+\infty$ if $\grad p\notin L^\frac{2\gamma}{\gamma-1}$.
Then $\mathcal{F}$ is uniformly convex since $\frac{2\gamma}{\gamma-1} > 2$ for $\gamma>1$.
Coercivity follows from the Poincar\'{e} inequality on $\bar H^1(\Omega)$.
Then the classical theory (see, e.g., \cite{Evans}) provides the existence of a unique minimizer for \eqref{eq:F}.
We conclude by observing that \eqref{steady:pL} is the Euler-Lagrange equation corresponding to the minimization problem \eqref{eq:F}.
\end{proof}

The situation is significantly different for the case $M(s) = s$, i.e., $\gamma=1$.
Then $m(s)=s^{-1}$ and $g(\lambda)\equiv c^{-2}$, thus not invertible.
In fact, the stationary version of \eqref{eq:A} reads
\[
   c^2 \grad p \otimes \grad p = \frac{\A}{|\A|},
\]
which implies that for almost all $x\in\Omega$ either $\A(x)=0$ or $c^2 |\grad p(x)|^2 =1$.
In the latter case, $\A = \lambda \grad p \otimes \grad p$ with $\lambda>0$.
Consequently, similarly as in \cite[Remark 7]{HMP15}, there exists
a positive, measurable function $\lambda=\lambda(x)$ such that
\[
   \A(x) = \lambda(x) \chi_{\{c|\grad p|=1\}}(x) \grad p(x) \otimes \grad p(x),
\]
where $\chi_{\{c|\grad p|=1\}}$ denotes the characteristic function of the set $\{x\in\Omega;\, c|\grad p(x)|=1 \}$,
and $p=p(x)$ solves the highly nonlinear Poisson equation
\(   \label{nonlinPL}
   - \grad\cdot \left( \left[r + \frac{\lambda}{c^2} \chi_{\{c|\grad p|=1\}} \right] \grad p \right) = S.
\)
subject to the homogeneous Neumann boundary condition on $\partial\Omega$.
This problem was studied in \cite[Section 4.2]{HMPS16}, where it was shown that
its solutions can be constructed as the free-boundary problem
\(
   -\grad\cdot\left[(1+a(x)^2)\grad p\right] &=& S,\qquad p \in H^1_0(\Omega), \label{NLP1} \\
   c^2 |\grad p(x)|^2 &\leq& 1,\qquad \mbox{a.e. on }\Omega, \label{NLP2} \\
   a(x)^2 \left[c^2|\grad p(x)|^2-1 \right] &=& 0,\qquad \mbox{a.e. on }\Omega, \label{NLP3}
\)
for some measurable function $a^2=a(x)^2$ on $\Omega$ which is the Lagrange multiplier
for the condition \eqref{NLP2}.
The function $\lambda=\lambda(x)$ can be chosen as $\lambda(x):=ca(x)$.

Moreover, in \cite[Section 4.2.1]{HMPS16}, it was shown that
solutions of \eqref{NLP1}--\eqref{NLP3} are minimizers of the energy functional
\(   \label{functional_g1}
    \mathcal{J}[p]:=\int_\Omega \left( \frac{|\grad p|^2}{2} - Sp  \right) \d x
\)
on the set $\mathcal{M}:=\{p\in H^1(\Omega), c^2|\grad p|^2\leq 1 \mbox{ a.e. on } \Omega\}$.
Existence and uniqueness of minimizers was established in \cite[Lemma 5]{HMPS16}.
An alternative approach is to consider the penalized problem
\[   \label{penalizedPoisson}
   -\grad\cdot \left[\left( 1+ \frac{(|\grad p_\eps|^2-1/c^2)_+}{\eps}\right) \grad p_\eps \right]= S, \qquad p_\eps \in H^1(\Omega),
\]
where $A_+ := \max(A,0)$ denotes the positive part of $A$, and pass to the limit $\eps\to 0$, see \cite[Section 4.2.2]{HMPS16}

\section{Steady states in 1D with $D=0$}
The system \eqref{eq:Poisson}--\eqref{eq:A} with $D=0$ in the spatially one-dimensional setting,
i.e., $\Omega:=(0,1)$, reads
\( \label{eq:1D1}
   -\partial_x ( (r+\A)\partial_x p ) &=& S, \\
   \label{eq:1D2}
   \partial_t \A - c^2 (\partial_x p)^2 + \frac{M'(|\A|)}{|\A|} \A &=& 0.
\)
Let us recall that the weak solution, constructed as the gradient flow in Proposition \eqref{prop:GF},
remains nonnegative for almost all $t>0$ and $x\in(0,1)$.
Denoting $B(x):=-\int_0^x S(y) \d y$ and integrating \eqref{eq:1D1} on the interval $(0,x)$,
taking into account the homogeneous Neumann boundary condition $\partial_x p(0)=0$, we obtain
\(  \label{eq:1D3}
   \partial_x p = \frac{B(x)}{r+\A}.
\)
Note that due to the assumption \eqref{ass:S}, we have $B(1) = - \int_0^1 S(y) \d y = 0$,
and, consequently, $\partial_x p(1) = 0$, so that the homogeneous Neumann boundary
condition for $p$ is satisfied at $x=1$ as well.

Inserting \eqref{eq:1D3} into \eqref{eq:1D2}, we obtain
\(  \label{eq:1D4}
      \partial_t \A = \frac{c^2 B(x)^2}{ (r+\A)^2 } - \frac{M'(|\A|)}{|\A|} \A.
\)
For further discussion, let us again adopt the generic choice $M(s)=s^\gamma/\gamma$
with the metabolic exponent $\gamma>0$.
We first study the case $\gamma\neq 1$.
Then, $\frac{M'(|\A|)}{|\A|} \A = |\A|^{\gamma-2}\A$, and recalling that $\A\geq 0$, we have $\frac{M'(|\A|)}{|\A|} \A = \A^{\gamma-1}$.
Therefore, steady states of \eqref{eq:1D4} are characterized as solutions of the algebraic equation
\(  \label{eq:1D5}
    (r+\A)^2 \A^{\gamma-1} = c^2 B(x)^2.
\)
A simple calculation reveals that if $\gamma>1$ the left-hand side of \eqref{eq:1D4}
is a strictly increasing function of $\A$, vanishing at $\A=0$. Therefore, for a given profile $B=B(x)$,
there exists a unique steady state $\A=\A(x)$ of \eqref{eq:1D4} on $\Omega=(0,1)$,
obtained as the unique solution of \eqref{eq:1D5} for each $x\in(0,1)$.
Another simple calculation reveals that this solution is asymptotically stable.
It is instructive to compare this result with \cite[Section 6.1]{HMP15} for the one-dimensional
version of the model \eqref{eq01}--\eqref{eq02},
where for $\gamma>1$ one has the unstable trivial steady state $m=0$ and a pair
of stable states $\pm m_s\neq 0$.

For $\gamma\in(0,1)$ the left-hand side of \eqref{eq:1D4} has the unique minimum
$r_\gamma := \left(\frac{2}{1+\gamma}\right)^2 \left( \frac{1-\gamma}{1+\gamma} \right)^{\gamma-1} r^{\gamma+1}$
on $(0,\infty)$. Consequently:
\begin{itemize}
\item
If $c^2 B(x)^2 > r_\gamma$, then \eqref{eq:1D4} has two positive steady states $0 < \A_1 < \A_2$,
where $\A_1$ is unstable and $\A_2$ is asymptotically stable.
\item
If $c^2 B(x)^2 = r_\gamma$, then \eqref{eq:1D4} has one steady state, asymptotically stable from the right-hand side
and unstable from the left.
\item
If $c^2 B(x)^2 < r_\gamma$, then \eqref{eq:1D4} has no steady state and its solution,
starting from a positive initial value, decays to zero in finite time.
\end{itemize}
Let us note that when $\gamma\in(0,1)$ and $\A$ reaches zero, the right-hand side of \eqref{eq:1D5} blows up and the equation loses validity.
From the modeling point of view, it makes sense to extend the solution $\A$ by zero past this point;
the transport of material in the (one-dimensional) structure is then due to the background permeability $r>0$ only.

For $\gamma=1$ equation \eqref{eq:1D5} is to be interpreted as the inclusion
\[
   \frac{c^2 B(x)^2}{ (r+\A)^2 } \in \sign(\A),
\]
where
\[
   \sign(\A) =  \left\{ \begin{array}{ll}
      \{1\} & \textrm{for } \A>0, \\[4mm]
       [0,1] & \textrm{for } \A=0.
  \end{array} \right.
\]
Obviously, $\A(x)>0$ is only possible if $c|B(x)| > r$. Consequently, the steady state is of the form
$\A(x) = \left(c|B(x)| - r\right)^+$, where $x^+ := \max\{0,x\}$. 
Note that the total permeability is then $\A(x) + r = \max\{c|B(x)|, r\}$ so that
in the region where $c|B(x)| < r$ the transport of material is by the background permeability $r>0$
of the medium.
Again, this may be compared with \cite[Section 6.1]{HMP15} for the model \eqref{eq01}--\eqref{eq02},
where for $\gamma=1$ there is a pair of nonzero stable steady states $\pm m_s\neq 0$ if and only if
$c|B(x)|$ is large enough. On the other hand, for small $c|B(x)|$ one has only the trivial steady state $m=0$,
which is stable.

\section*{Acknowledgments}
G. P. acknowledges support from the Austrian Science Fund (FWF)
through the grants F 65 and W 1245.
J. H. acknowledges the fruitful discussions with Oliver Tse that have taken place
during the author's visit of TU Eindhoven, which helped to initiate some
ideas presented in this paper.


\end{document}